\theoremstyle{definition}
\newtheorem{defn}{Definition}[section]
\newtheorem{prop}[defn]{Proposition}
\newtheorem{rmk}[defn]{Remark}
\newtheorem{eg}[defn]{Example}
\newtheorem{thrm}[defn]{Theorem}
\newtheorem{cor}[defn]{Corollary}
\newcommand{\F}{\mathbb{F}}
\newcommand{\N}{\mathbb{N}}
\newcommand{\Aut}{\mathrm{Aut}}
\newcommand{\Perm}{\mathrm{Perm}}
\newcommand{\Gal}{\mathrm{Gal}}
\newcommand{\Map}{\mathrm{Map}}
\newcommand{\Norm}{\mathrm{Norm}}
\newcommand{\Hol}{\mathrm{Hol}}
\newcommand{\Stab}{\mathrm{Stab}}
\newcommand{\GL}{\mathrm{GL}}
\newcommand{\id}{\mathrm{id}}
\newcommand{\into}{\hookrightarrow}
\newcommand{\acts}[1]{\,^{#1}}
\newcommand{\barg}{\overline{g}}
\begin{document}

\title[Almost Classical Skew Bracoids]{Almost Classical Skew Bracoids}

\author{Isabel Martin-Lyons}
\address{School of Computer Science and Mathematics \\ Keele University \\ Staffordshire \\ ST5 5BG \\ UK}
\email{I.D.Martin-Lyons@keele.ac.uk}

\subjclass[2020]{12F10; 
16T05; 
16T25 
20N99; 
}

\keywords{Skew bracoids, skew braces, Hopf-Galois structures, Yang-Baxter equation}

\begin{abstract}
We investigate two sub-classes of skew bracoids, the first consists of those we term \textit{almost a brace}, meaning the multiplicative group decomposes as a certain semi-direct product, and then those that are \textit{almost classical}, which additionally specifies the relationship between the multiplicative group and the additive. Skew bracoids with these properties have applications in Hopf-Galois theory, in particular for questions concerning the Hopf-Galois correspondence, and can also yield solutions to the set-theoretic Yang-Baxter equation. We use this skew bracoid perspective to give a new construction building on the induced Hopf-Galois structures of Crespo, Rio and Vela, recover a result of Greither and Pareigis on the Hopf-Galois correspondence, and examine the solutions that arise from skew bracoids, in particular where more than one solution may be drawn from a single skew bracoid.
\end{abstract}

\maketitle

\section{Introduction} \label{intro}

The skew bracoid was introduced in \cite{mltruoid}, joining a myriad of generalisations of Guarnieri and Vendramin's skew brace \cite{guaven} (see for example \cite{ccssemi}, \cite{weakbrace}, \cite{drnear}), which is itself a generalisation of Rump's brace \cite{rbrace}. A \textit{skew brace} $(G,\star,\cdot)$ has two group structures, $\star$ \textit{additive} and $\cdot$ \textit{multiplicative}, on the same underlying set. The interaction between these structures is governed by the \textit{skew brace relation}: 
\[g\cdot (h_1 \star h_2) = (g\cdot h_1) \star g^{-\star} \star (g\cdot h_2) \,\,\,\,\text{for all }g,h_1,h_2\in G,\] 
where $g^{-\star}$ denotes the inverse of $g$ with respect to $\star$. A \textit{skew bracoid} $(G,\cdot,N,\star,\odot)$ comprises two groups, $(N,\star)$ \textit{additive} and $(G,\cdot)$ \textit{multiplicative}, that relate via a transitive action $\odot$ of the latter on the former, satisfying an analogous compatibility relation, the \textit{skew bracoid relation}:
\[g\odot (\eta_1 \star \eta_2) = (g\odot \eta_1) \star (g\odot e_N)^{-1} \star (g\odot \eta_2) \,\,\,\,\text{for all }g \in G\text{ and all }\eta_1,\eta_2\in N.\] 
For brevity, we commonly write $(G,N,\odot)$, or even $(G,N)$, for $(G,\cdot,N,\star,\odot)$, and suppress $\star$ and $\cdot$ where possible. 

To see the skew bracoid as a generalisation of the skew brace, notice that any given skew brace $(G,\star,\cdot)$ may be considered a skew bracoid $(G,\cdot,G,\star,\odot)$, taking $\odot$ to be the operation $\cdot$. Conversely, for a skew bracoid $(G,N)$ in which $S=\Stab_G(e_N)$ is trivial we may transfer the operation from one group onto the other to produce a bona fide skew brace. We say that such a skew bracoid is \textit{essentially a skew brace} \cite[Example 2.2]{mltruoid}. 

The brace, and later skew brace, were developed primarily for the study of solutions to the set-theoretic Yang-Baxter equation, which was first suggested by Drinfeld over thirty years ago \cite{drinfeld}. A \textit{solution to the set-theoretic Yang-Baxter equation} (hereafter simply a \textit{solution}) consists of a set $G$ and a map $\bm{r}:G\times G\to G\times G$ satisfying
\[(\bm{r}\times \id)(\id\times \bm{r})(\bm{r}\times \id)=(\id\times \bm{r})(\bm{r}\times \id)(\id\times \bm{r})\]
as functions on $G\times G\times G$. A large family of skew bracoids have also been connected to solutions \cite{ckmtybe}. To produce a solution in the manner the authors outline, the skew bracoid $(G,N)$ must \textit{contain a brace}, meaning that there is a subgroup $H$ of $G$ for which $(H,N)$ is essentially a skew brace or equivalently that $G$ has the exact factorisation $HS$ where $S=\Stab_G(e_N)$ \cite{ckmtybe}. It is therefore of interest to study skew bracoids in which the multiplicative group admits such an exact factorisation. In this work, we investigate this case and moreover, that in which $G$ decomposes as a semi-direct product $H\rtimes S$.

This exploration is also motivated by Hopf-Galois theory, which has been linked to both the skew brace and the skew bracoid. To outline this connection, we must go back to the ground-breaking work of Greither and Pareigis, which classifies Hopf-Galois structures on finite separable extensions of fields via permutation groups \cite{gp}. A result of Byott reduces the problem from this permutation setting to that of the \textit{holomorph of a group} (see \Cref{sechol}) \cite{btrans}; Hopf-Galois structures on Galois extensions and skew braces are then linked by their mutual connection to the regular subgroups of the holomorph \cite{bracehgs}\cite[Appendix A]{svb}. Stefanello and Trappeniers have now refined the connection to express it as a bijection and align the natural substructures across the correspondence \cite{sttwistpub}; it is this formulation that is extended to the separable, skew bracoid case in \cite{mltruoid}.

One of the central questions of Hopf-Galois theory is that of the Hopf-Galois correspondence. Given a Hopf-Galois structure on some finite separable extension of fields $L/K$, there is a natural notion of the fix of $L$ by a K-Hopf sub-algebra, returning an intermediate field of $L/K$. This fix map is always injective and inclusion-reversing but unlike in classical Galois theory, it is not necessarily surjective. The question of surjectivity has seen a lot of attention in the Galois case (see \cite{chcircstab}, \cite{kktucorresp}, \cite{chhgc}, \cite{sttwistpub}), while \cite{crvhgc} is perhaps the only focused study of the separable case in recent years. However, there is a plentiful class of examples for which the Hopf-Galois correspondence is surjective in the separable case, identified by Greither and Pareigis in their 1987 paper \cite{gp} and classified for various degrees in for example \cite{kohlradext}, \cite{byalmost}, \cite{cslowdeg}, \cite{darpq}. Writing $E$ for the Galois closure of $L/K$ and $S$ for $\Gal(E/L)$, we say that $L/K$ is \textit{almost classically Galois} or simply \textit{almost classical} if there is an $H$ for which the Galois group $G=\Gal(E/K)$ decomposes as a semi-direct product $H\rtimes S$. With an explicit description, Greither and Pareigis prove that these extensions admit at least one Hopf-Galois structure for which the Hopf-Galois correspondence is surjective \cite[Theorem 5.2]{gp}, it is these structures that Kohl later terms themselves \textit{almost classical} \cite{kohlradext}.

As alluded to, here we discuss how almost classical manifests in a skew bracoid, both at an extension and a structural level. In \Cref{secalcla}, we give the main definitions and explore the properties of such skew bracoids in their own right, including an examination of the all-important $\gamma$-function \cite[Theorem 2.8]{mltruoid}. In \Cref{sechg}, we confirm the connection to the Hopf-Galois setting, and use this skew braciod description to recover the result of Greither and Pareigis on the Hopf-Galois correspondence for almost classical structures \cite[Theorem 5.2]{gp}. It is our hope that this may work as a prototype for the study of the Hopf-Galois correspondence using skew bracoids, which has seen such success in the skew brace case.

The hypothesis that $\Gal(E/K)$ is a semi-direct product is also employed by Crespo, Rio and Vela in their construction of \textit{induced} Hopf-Galois structures \cite[Theorem 3]{crvind}. This uses a Hopf-Galois structure on $L/K$ and $E/L$ to induce one on $E/K$, or, in the language of skew bracoids, this is constructing a skew brace $(G,\star,\cdot)$ from a skew bracoid $(G,\cdot,G/S,\star,\odot)$ and a skew brace $(S,\star, \cdot)$. In \Cref{inducesec}, we give a more general construction, taking two related skew bracoids to induce a third, the case in which the second is essentially a skew brace gives the existing construction.

In \Cref{sechol}, we consider these properties from the holomorph, which is already fairly well understood at an extension level but we provide a clean description at the structural level. To do this, we exploit the correspondence between Hopf-Galois structures and subgroups of the holomorph, via \cite{gp} and \cite{btrans}, and skew bracoids and subgroups of the holomorph, via \cite[Theorem 2.8]{mltruoid}. However, we note that the existing route from Hopf-Galois structure to skew bracoid does not necessarily yield the same skew bracoid and the one obtained by first passing to the holomorph, we address this in \Cref{sectrans}.  

In \Cref{secsol}, we collate and interpret the results of the proceeding sections as they pertain to solutions. We also move towards addressing the question of how the choice of $H$, if there are multiple candidates, affects the resulting solution.

\section{Almost Classical Skew Bracoids}
\label{secalcla} 

In this section we introduce our main definitions, provide examples and discuss various properties of these objects.

\begin{defn}
Let $(G,N)$ be a skew bracoid and $S=\Stab_G(e_N)$. We say that $(G,N)$ is \textit{almost a brace} (\textit{with respect to $H$}) if and only if $S$ has a normal complement $H$ in $G$, so that $G$ decomposes as a semi-direct product $H\rtimes S$. 
\end{defn}

\begin{rmk}
    In the language of \cite{ckmtybe}, this is saying $(G,N)$ contains a brace with an additional normality condition. Hence each skew bracoid that is almost a brace is connected to a \textit{semi-brace} (see \cite{ccssemi}), and can therefore be used to produce a solution to the set-theoretic Yang-Baxter equation as described in \cite{ckmtybe}. The almost a brace case is also explored by Castelli from a semi-brace perspective in \cite[Section 3]{castacsemi}.
\end{rmk}

As with the terminology ``contain a brace", the omission of ``skew" in ``almost a brace" is not intended to imply that the additive structure is abelian. More substantively, if we have a skew bracoid that is almost a brace it is automatic that $(H,N)$ is a sub-skew bracoid of $(G,N)$, in fact to contain a brace is enough to ensure this \cite{ckmtybe}. Clearly $H$ must be a subgroup of $G$ and $H$ is transitive on $N$ as \[H\odot e_N= HS\odot e_N=G\odot e_N = N.\]
From the exact factorisation of $G$ we also have that $H\cap S= \{e_G\}$, so $\Stab_H(e_N)=\{e_G\}$, which means $H$ acts regularly on $N$ and $(H,N)$ is essentially a skew brace. To elucidate further, this gives that the map  $h\mapsto h\odot e_N$ is a bijection between $H$ and $N$, which can be used to transfer the operation in $N$ onto $H$ (say) to give a skew brace structure to $H$.

Once viewed as a skew brace, it is reasonable to ask if the group operations in $(H,N)$ coincide, that is if the skew brace is \textit{trivial}.

\begin{defn}
We say that a skew bracoid $(G,N)$ is \textit{almost classical} if and only if it is almost a brace with respect to some $H$, for which the sub-skew bracoid $(H,N)$ is trivial when thought of as a skew brace.
\end{defn}

Explicitly, this is the same as saying
\begin{equation}\label{niceop}
(h_1 \odot e_N)(h_2\odot e_N)= h_1h_2\odot e_N
\end{equation}
for all $h_1,h_2\in H$. It is then easy to see that for any $h\in H$ we have
\begin{equation}\label{niceinv}
(h\odot e_N)^{-1}=h^{-1}\odot e_N,
\end{equation}
which is unusual in a skew bracoid, and indeed may not hold for some $g\in G \setminus H$.

Henceforth, we will use the phrase \textit{essentially trivial} to mean that the skew bracoid is essentially a skew brace and is trivial when thought of as such.

\begin{eg}
Every skew brace is almost a brace. Let $(N,N)$ be a skew brace thought of as a skew bracoid, then $S=\Stab_N(e_N)$ is trivial so it has the full group $N$ as a normal complement. Further, $(N,N)$ is almost classical precisely when it is trivial.
\end{eg}

\begin{eg}
We can construct skew bracoids that are almost a brace using the asymmetric product of Catino, Colazzo and Stefanelli \cite[Theorem 3]{ccsasym} and the partial quotienting procedure of \cite[Proposition 2.4]{mltruoid}. 

Let $(H,+_H,\cdot_H)$ and $(S,+_S,\cdot_S)$ be braces, so that $(H,+_H)$ and $(S,+_S)$ are abelian groups, and suppose we have a symmetric $2$-cocycle on $(H,+_H)$ taking values in $S$ and a homomorphism from $(H,\cdot_H)$ to the brace automorphisms of $(S,+_S,\cdot_S)$ satisfying the compatibility relation given in \cite[Theorem 3]{ccsasym}. One can give a brace structure to $H\times S$ via the asymmetric product, denoted $H\rtimes_\circ S$, this brace has multiplicative group isomorphic to $(H, \cdot_H)\rtimes (S,\cdot_S)$ and additive group isomorphic to $(H,+_H)\times (S,+_S)$.

As noted in \cite{bcjoasym}, the set $\{e_H\} \times S$ is then a (strong) left ideal of $H\rtimes_\circ S$, so following \cite[Proposition 2.4]{mltruoid} we may take the additive quotient to produce a skew bracoid of the form $(H \times S, H)$. As always, the stabiliser of the additive identity is the subgroup by which we took the quotient, $\{e_H\} \times S$, which has a normal complement, $H \times \{e_S\}$, so the skew bracoid is almost a brace.
\end{eg}

\begin{eg}
\label{d2ncd} 
Let $d,n \in \N$ be such that $d\,|\, n$, then we may take $G=\langle r, s \;|\; r^n=s^2=e, \; srs = r^{-1} \rangle \cong D_{2n}$, $N=\langle \eta \rangle \cong C_d$ and have $G$ act on $N$ via \[ r^is^j\odot \eta^k = \eta^{i+(-1)^jk}.\]
As discussed in \cite[Example 2.3]{mltruoid}, this describes a family of skew bracoids. By inspection of the action we see that $S=\Stab_G(e_N)=\langle r^{d}, s \rangle$, we investigate when this has a complement in $G$. Note that any such complement has order $d$. 

Skew bracoids in this family are \textit{reduced}, that is the action $\odot$ is faithful, precisely when $d=n$ \cite[Example 2.16]{mltruoid}. In this case $S= \langle s \rangle$, which has $R=\langle r \rangle$ as a normal complement and since $r^ir^j\odot e_N=\eta^{i+j}=(r^i\odot e_N)(r^j\odot e_N)$, skew bracoids of the form $(D_{2n},C_n)$ are almost classical. If we take $n$ to be even then $S$ has $H= \langle r^2,rs\rangle$ as an additional normal complement, but here $H$ is not isomorphic to $N$ so $(D_{2n},C_n)$ is merely almost a brace with respect to $H$.
 
Allowing $d$ to differ from $n$, we see a surprising variety of behaviour. Taking $n=12$ and $d=4$ for example, we have that $(G,N)$ is almost classical with respect to $R=\langle r^3 \rangle$ but also contains a brace with respect to $H_c=\langle r^6, r^cs\rangle \cong D_4$ for $c\in \{1,3,5\}$. Instead taking $n=12$ and $d=6$, we have lost the cyclic complement but the skew bracoid still contains a brace with respect to $H_c=\langle r^4, r^cs\rangle \cong D_6$ for $c\in \{1,3\}$. Finally, taking $n=9$ and $d=3$ we have a skew bracoid that does not even contain a brace.
\end{eg}

Each of the non-reduced examples is a member of an infinite family of skew bracoids with those attributes, in particular, skew bracoids $(D_{2n},C_d)$ for which $n$ is odd and $(d,\frac{n}{d})>1$ do not contain a brace. In the reduced case we believe that non-examples are less plentiful, though we have an occurrence of this due to Darlington \cite{darpq}, highlighted in \cite[Example 2.6]{ckmtybe}.

Lastly, to see that to be almost a brace is a stronger condition than to contain a brace, even in the reduced case, we turn to \cite[Theorem 3.6]{bysol}.

\begin{eg}
    As discussed in \cite[Example 2.22]{mltruoid}, the result of Byott implies the existence of a skew bracoid whose additive group is elementary abelian of order $8$ and multiplicative group is isomorphic to $\GL_3(\F_2)$. We saw in \cite[Example 2.5]{ckmtybe} that this skew bracoid contains a brace with respect to any Sylow-2 subgroup of the multiplicative group, but as $\GL_3(\F_2)$ is simple, these subgroups are not normal and therefore the skew bracoid is not almost a brace.
\end{eg}

We now turn our attention to the $\gamma$-function, this will shed more light on the inner-workings of these skew bracoids and will be central to our upcoming correspondences.

\begin{prop}
\label{acgamma}
Let $(G,N)$ be a skew bracoid that is almost classical with respect to $H$ and write $S$ for $\Stab_G(e_N)$. For all $s_1\in S$ and all $h_1,h_2\in H$, we have that 
\begin{equation*}
    \acts{\gamma(h_1s_1)}(h_2 \odot e_N)= s_1h_2s_1^{-1} \odot e_N.
\end{equation*}

Note that since $G=HS$ and $H$ is transitive on $N$, this completely determines the $\gamma$-function. We will repeatedly describe $G$ in this way.
\end{prop}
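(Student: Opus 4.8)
The plan is to unwind the definition of the $\gamma$-function of \cite[Theorem 2.8]{mltruoid} and then feed in the two structural features of an almost classical skew bracoid: the normality of $H$ inside $G = H \rtimes S$, and the triviality relation \eqref{niceop}. Recall that $\gamma \colon G \to \Aut(N)$ is given by $\acts{\gamma(g)}\eta = (g \odot e_N)^{-1} \star (g \odot \eta)$ for $g \in G$ and $\eta \in N$, the inverse being with respect to $\star$. Since $G = HS$ and $H$ acts transitively on $N$, every element of $G$ has the form $h_1 s_1$ with $h_1 \in H$, $s_1 \in S$, and every element of $N$ has the form $h_2 \odot e_N$ with $h_2 \in H$; so it is enough — and, as the closing remark notes, sufficient to pin down all of $\gamma$ — to evaluate $\acts{\gamma(h_1 s_1)}(h_2 \odot e_N)$.

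First I would simplify the two pieces appearing in $\acts{\gamma(h_1 s_1)}(h_2 \odot e_N) = \big((h_1 s_1) \odot e_N\big)^{-1} \star \big((h_1 s_1) \odot (h_2 \odot e_N)\big)$. For the first, $(h_1 s_1) \odot e_N = h_1 \odot (s_1 \odot e_N) = h_1 \odot e_N$ because $s_1 \in S = \Stab_G(e_N)$. For the second, associativity of the action gives $(h_1 s_1) \odot (h_2 \odot e_N) = h_1 \odot \big(s_1 \odot (h_2 \odot e_N)\big) = h_1 \odot \big((s_1 h_2) \odot e_N\big)$. Now I would use that $H$ is normal: writing $s_1 h_2 = (s_1 h_2 s_1^{-1}) s_1$ with $s_1 h_2 s_1^{-1} \in H$, we get $(s_1 h_2) \odot e_N = (s_1 h_2 s_1^{-1}) \odot (s_1 \odot e_N) = (s_1 h_2 s_1^{-1}) \odot e_N$. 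Abbreviating $h_3 = s_1 h_2 s_1^{-1} \in H$, this yields $(h_1 s_1) \odot (h_2 \odot e_N) = h_1 \odot (h_3 \odot e_N) = (h_1 h_3) \odot e_N$.

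Putting the pieces together, $\acts{\gamma(h_1 s_1)}(h_2 \odot e_N) = (h_1 \odot e_N)^{-1} \star \big((h_1 h_3) \odot e_N\big)$. The triviality relation \eqref{niceop}, applied to $h_1, h_3 \in H$, rewrites $(h_1 h_3) \odot e_N$ as $(h_1 \odot e_N) \star (h_3 \odot e_N)$, and the two copies of $h_1 \odot e_N$ cancel, leaving $\acts{\gamma(h_1 s_1)}(h_2 \odot e_N) = h_3 \odot e_N = (s_1 h_2 s_1^{-1}) \odot e_N$, which is the claim.

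There is no real difficulty here: the one step that needs care is the use of normality of $H$ to rewrite $s_1 h_2$ — which a priori only lies in the coset $H s_1$ — as $(s_1 h_2 s_1^{-1}) s_1$, whose $H$-component is the conjugate $s_1 h_2 s_1^{-1}$; this is precisely the point at which the semi-direct product hypothesis (rather than a bare exact factorisation) enters, and it is what produces the conjugation in the statement. The other thing worth verifying is that the conventions match those of \cite{mltruoid} and of \eqref{niceop}--\eqref{niceinv}, namely that the $\gamma$-function is the one recalled above and that all inverses in sight are $\star$-inverses; a quick sanity check (e.g.\ that the right-handed variant $(g \odot \eta) \star (g \odot e_N)^{-1}$ would \emph{not} give a conjugation-free answer unless $N$ were abelian) confirms the left-handed convention is the correct one.
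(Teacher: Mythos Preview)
Your proof is correct and follows essentially the same approach as the paper's: unfold the definition of $\gamma$, use that $s_1$ stabilises $e_N$ to insert the missing $s_1^{-1}$, and then invoke the triviality relation to cancel the $h_1$-contribution. The only cosmetic difference is that the paper uses \eqref{niceinv} to rewrite $(h_1\odot e_N)^{-1}$ as $h_1^{-1}\odot e_N$ and then applies \eqref{niceop} once to combine, whereas you apply \eqref{niceop} to split $(h_1h_3)\odot e_N$ and cancel directly; these are interchangeable.
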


\begin{proof}
Let $s_1\in S$ and $h_1,h_2\in H$. Then
\begin{align*}
    \acts{\gamma(h_1s_1)}(h_2\odot e_N) &= (h_1s_1 \odot e_N)^{-1}(h_1s_1 \odot (h_2\odot e_N)) \\
                        &= (h_1\odot e_N)^{-1}(h_1s_1h_2 \odot e_N)    \\
                        &= (h_1^{-1} \odot e_N)(h_1s_1h_2s_1^{-1} \odot e_N) &&\text{ by (\ref{niceinv})}  \\
                        &= s_1h_2s_1^{-1}\odot e_N &&\text{ by (\ref{niceop}).}\qedhere
\end{align*}
\end{proof}

In particular, $\gamma(h)$ is the identity on $N$ for all $h\in H$. In fact this characterises almost classical skew bracoids within the class of skew bracoids that are almost a brace.

\begin{prop} \label{gamker}
    Let $(G,N)$ be a skew bracoid that is almost a brace with respect to $H$. Then $H\subseteq \ker(\gamma)$ if and only if $(G,N)$ is almost classical with respect to $H$.
\end{prop}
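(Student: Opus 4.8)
The plan is to prove the two implications separately, both times using the defining relation of the $\gamma$-function from \cite[Theorem 2.8]{mltruoid}, namely $\acts{\gamma(g)}(\eta) = (g\odot e_N)^{-1}\star(g\odot\eta)$ for all $g\in G$ and all $\eta\in N$, together with the fact established earlier in this section that, under the almost-a-brace hypothesis, $H$ acts regularly — in particular transitively — on $N$.

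For the reverse implication, suppose $(G,N)$ is almost classical with respect to $H$. Then \Cref{acgamma} applies, and setting $s_1 = e_G$ there gives $\acts{\gamma(h_1)}(h_2\odot e_N) = h_2\odot e_N$ for all $h_1,h_2\in H$. Since every element of $N$ is of the form $h_2\odot e_N$ by transitivity of the $H$-action, $\gamma(h_1)$ is the identity map on $N$, i.e.\ $h_1\in\ker(\gamma)$; this is precisely the observation recorded immediately after \Cref{acgamma}, and it gives $H\subseteq\ker(\gamma)$.

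For the forward implication, assume $H\subseteq\ker(\gamma)$ and fix $h_1,h_2\in H$. Since $\gamma(h_1)$ is the identity on $N$, evaluating the defining relation at $\eta = h_2\odot e_N$ yields $h_2\odot e_N = (h_1\odot e_N)^{-1}\star\bigl(h_1\odot(h_2\odot e_N)\bigr) = (h_1\odot e_N)^{-1}\star(h_1h_2\odot e_N)$, where the second equality uses that $\odot$ is an action. Rearranging gives $h_1h_2\odot e_N = (h_1\odot e_N)\star(h_2\odot e_N)$, which is exactly \eqref{niceop}, the condition that the sub-skew bracoid $(H,N)$ is essentially trivial; hence $(G,N)$ is almost classical with respect to $H$. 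There is no genuine obstacle in this argument: it is a short unwinding of definitions. The only point needing a moment's care is the translation between "$(H,N)$ is trivial when thought of as a skew brace" and the identity \eqref{niceop} — i.e.\ that transferring the operation of $N$ onto $H$ along the bijection $h\mapsto h\odot e_N$ makes triviality of the resulting skew brace equivalent to \eqref{niceop} — but this is already spelled out in the paragraph preceding the statement.
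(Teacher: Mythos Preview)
Your proof is correct and follows essentially the same approach as the paper: both unwind the defining relation $\acts{\gamma(g)}\eta = (g\odot e_N)^{-1}(g\odot\eta)$ on elements of the form $h_2\odot e_N$ (which exhaust $N$ by transitivity of $H$) and identify the result with \eqref{niceop}. The only cosmetic differences are that the paper runs both directions as a single chain of equivalences using $\gamma(h_1^{-1})$ and an appeal to the skew bracoid relation, whereas you treat the two implications separately, use $\gamma(h_1)$ directly, and cite \Cref{acgamma} for the reverse direction.
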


\begin{proof}
    The statement $H\subseteq \ker(\gamma)$ means precisely that for all $h_1,h_2\in H$
    \begin{align*}
        && h_2 \odot e_N &= \acts{\gamma(h_1^{-1})}(h_2\odot e_N)          && \\
        &&              &= (h_1^{-1} \odot e_N)^{-1}(h_1^{-1}h_2\odot e_N) &&\text{by definition of $\gamma$}\\
        &&              &= h_1^{-1}\odot ((h_1\odot e_N)(h_2\odot e_N))    &&\text{by the skew bracoid relation.}
    \end{align*}    
    This occurs if and only if $h_1h_2\odot e_N = (h_1\odot e_N)(h_2\odot e_N)$ for all $h_1,h_2 \in H$, which is exactly \Cref{niceop}, the condition for $(G,N)$ to be almost classical with respect to $H$.
\end{proof}

With this description of the $\gamma$-function we obtain a strong result concerning the left ideals of almost classical skew bracoids.

\begin{prop}
\label{hgcsb}
Let $(G,N)$ be an almost classical skew braciod and $S=\Stab_G(e_N)$. If $G'$ is a subgroup of $G$ containing $S$, then $G'\odot e_N$ is a left ideal of $(G,N)$.
\end{prop}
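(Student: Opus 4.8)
The plan is to show directly that $M := G'\odot e_N$ is a subgroup of $(N,\star)$ and that $M$ is stable under the $\gamma$-action; together these say exactly that $M$ is a left ideal of $(G,N)$. The first move is purely group-theoretic bookkeeping. Write $G = H\rtimes S$ as provided by the almost classical hypothesis, and put $H' := G'\cap H$, a subgroup of $H$. Since $S\subseteq G'$, any $g\in G'$ decomposed as $g = hs$ with $h\in H$ and $s\in S$ satisfies $h = gs^{-1}\in G'\cap H = H'$, so $G' = H'S$; and because $s\odot e_N = e_N$ for every $s\in S$, this gives $M = G'\odot e_N = H'S\odot e_N = H'\odot e_N$.

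Next I would identify $M$ as a $\star$-subgroup. As $(G,N)$ is almost classical with respect to $H$, the sub-skew bracoid $(H,N)$ is essentially trivial, so by \eqref{niceop} the map $h\mapsto h\odot e_N$ is an isomorphism of groups $(H,\cdot)\to(N,\star)$. Hence $M = H'\odot e_N$ is the image of the subgroup $H'$ under a group isomorphism, and so is a subgroup of $(N,\star)$.

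For $\gamma$-stability, take an arbitrary $g\in G$, written $g = h_1s_1$ with $h_1\in H$, $s_1\in S$, and an arbitrary element $m = h_2\odot e_N$ of $M$ with $h_2\in H'$. Applying \Cref{acgamma},
\[
\acts{\gamma(g)}m = \acts{\gamma(h_1s_1)}(h_2\odot e_N) = s_1h_2s_1^{-1}\odot e_N.
\]
Now $s_1h_2s_1^{-1}\in H$ since $H\trianglelefteq G$, and $s_1h_2s_1^{-1}\in G'$ since $s_1\in S\subseteq G'$ and $h_2\in H'\subseteq G'$; therefore $s_1h_2s_1^{-1}\in G'\cap H = H'$ and $\acts{\gamma(g)}m\in H'\odot e_N = M$. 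Thus $\acts{\gamma(g)}M\subseteq M$ for all $g\in G$, and replacing $g$ by $g^{-1}$ (using that $\gamma(g)\in\Aut(N,\star)$ has inverse $\gamma(g^{-1})$) yields the reverse inclusion, so $\acts{\gamma(g)}M = M$ for all $g\in G$. With the previous paragraph this shows $M = G'\odot e_N$ is a left ideal.

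I do not expect a real obstacle here: the substance is entirely in \Cref{acgamma}, which shows that on $M$ the $\gamma$-action is blind to the $H$-component of $g$ and reduces to conjugation by its $S$-component — and $H'$, hence $M$, is closed under exactly that once $S\subseteq G'$. The only care needed is in the structural step $G' = H'S$ and in invoking essential triviality of $(H,N)$ to recognise $M$ as an additive subgroup rather than just a subset.
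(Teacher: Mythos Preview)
Your proof is correct and follows essentially the same route as the paper: the heart of both arguments is applying \Cref{acgamma} to see that $\acts{\gamma(hs)}(h'\odot e_N)=sh's^{-1}\odot e_N$ and then checking $sh's^{-1}\in G'$. The only difference is that the paper invokes an external result (\cite[Proposition 3.11]{mltruoid}) saying closure under $\gamma(G)$ already forces the subgroup condition, whereas you verify that $M$ is a $\star$-subgroup directly via the isomorphism $(H,\cdot)\cong(N,\star)$; your version is thus slightly more self-contained but otherwise identical in substance.
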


\begin{proof}
It is enough to show that $G'\odot e_N$ is closed under $\gamma(G)$, as this implies the subgroup condition \cite[Proposition 3.11]{mltruoid}.

Let $g\in G$ and $g'\in G'$, then write $g=hs, g'=h's'$ with $h,h'\in H$ and $s,s'\in S$. Now
\begin{align*}
    \acts{\gamma(hs)}(h's'\odot e_N) &= \acts{\gamma(hs)}(h'\odot e_N)&& \\
                                &= sh's^{-1}\odot e_N &&\text{ by \Cref{acgamma}.}
\end{align*} 
Notice that $s,s^{-1}\in S\subseteq G'$ and $h'=(h's')(s')^{-1} \in G'$, so $sh's^{-1}\odot e_N \in G'\odot e_N$ as required.
\end{proof}

\section{Induced Skew Bracoids}\label{inducesec}

Using skew bracoids that are almost a brace, we give a construction that points towards a semi-direct product of skew bracoids. This builds on the work of Crespo, Rio and Vela on induced Hopf-Galois structures \cite{crvind}, the relationship between our construction and theirs is discussed in \Cref{sechg}.

\begin{thrm}
\label{induce}
Let $(G, N, \odot_N)$ be a skew bracoid that is almost a brace with respect to $H$ and write $S$ for $\Stab_G(e_N)$. Suppose we also have a skew bracoid $(S,M,\odot_M)$. 
We may take the (external) direct product of $N$ and $M$ and, writing $\pi$ for the natural projection of $G$ onto $S$, define
\[g\odot (\eta,\mu) := (g\odot_N \eta, \pi(g)\odot_M \mu)\] for $g\in G$ and $(\eta,\mu) \in N\times M$. This $\odot$ is then an action, under which $(G,N\times M,\odot)$ is a skew bracoid.
\end{thrm}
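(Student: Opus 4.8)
The plan is to verify the two defining conditions of a skew bracoid for $(G, N\times M, \odot)$: first that $\odot$ is a transitive group action of $G$ on $N\times M$, and second that it satisfies the skew bracoid relation. I would begin with the action axioms. That $\odot$ is an action follows componentwise: the first component is the given action $\odot_N$, and the second is the composite $\pi(g)\odot_M(-)$, which is an action because $\pi\colon G\to S$ is a group homomorphism and $\odot_M$ is an action of $S$. So $e_G\odot(\eta,\mu) = (\eta,\mu)$ and $g_1\odot(g_2\odot(\eta,\mu)) = (g_1g_2)\odot(\eta,\mu)$ both reduce to the corresponding identities in each coordinate.

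Transitivity is the first point requiring the ``almost a brace'' hypothesis. Given $(\eta,\mu)\in N\times M$, I want $g\in G$ with $g\odot(e_N,e_M) = (\eta,\mu)$, i.e.\ $g\odot_N e_N = \eta$ and $\pi(g)\odot_M e_M = \mu$. Since $(S,M,\odot_M)$ is a skew bracoid, pick $s\in S$ with $s\odot_M e_M = \mu$. Now I need to adjust within the coset to also hit $\eta$: because $G = H\rtimes S$ with $H$ acting regularly on $N$ (as established in the discussion following the definition of ``almost a brace''), there is a unique $h\in H$ with $h\odot_N e_N = \eta$; moreover $\pi(hs) = s$ since $H = \ker\pi$, and $hs\odot_N e_N = h\odot_N(s\odot_N e_N) = h\odot_N e_N = \eta$. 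So $g = hs$ works, giving transitivity. (Here I use that $S = \Stab_G(e_N)$, so $s\odot_N e_N = e_N$.)

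For the skew bracoid relation, I would check it coordinatewise on $(\eta_1,\mu_1)\star(\eta_2,\mu_2) = (\eta_1\star_N\eta_2,\ \mu_1\star_M\mu_2)$, where $\star$ on $N\times M$ is the direct product operation. The first coordinate of $g\odot\big((\eta_1,\mu_1)\star(\eta_2,\mu_2)\big)$ is $g\odot_N(\eta_1\star_N\eta_2)$, which by the skew bracoid relation for $(G,N,\odot_N)$ equals $(g\odot_N\eta_1)\star_N(g\odot_N e_N)^{-1}\star_N(g\odot_N\eta_2)$; this is exactly the first coordinate of the right-hand side $(g\odot(\eta_1,\mu_1))\star(g\odot(e_N,e_M))^{-1}\star(g\odot(\eta_2,\mu_2))$ once one notes the inverse in the direct product is taken coordinatewise and the $N$-coordinate of $g\odot(e_N,e_M)$ is $g\odot_N e_N$. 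The second coordinate is handled identically using the skew bracoid relation for $(S,M,\odot_M)$ applied to $\pi(g)\in S$, together with $\pi(g)\odot_M e_M$ being the $M$-coordinate of $g\odot(e_N,e_M)$. Since both coordinates match, the relation holds.

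The calculations here are all routine; the only genuinely substantive point is transitivity, where one must combine the regular action of $H$ on $N$ with the transitive action of $S$ on $M$ and exploit that $\pi$ kills $H$ and fixes $S$ pointwise. The ``almost a brace'' hypothesis is precisely what makes this decomposition available. I would also remark that the second action $g\mapsto \pi(g)\odot_M(-)$ need not be transitive on its own as an action of $G$ factoring through $S$—it is, since $\pi$ is surjective and $\odot_M$ transitive—but the real content is that the two coordinates can be hit simultaneously, which the semidirect product structure guarantees.
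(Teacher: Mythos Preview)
Your proof is correct and follows essentially the same approach as the paper: componentwise verification of the action axioms, transitivity via the decomposition $g=hs$ with $H$ regular on $N$ and $S$ transitive on $M$, and a coordinatewise check of the skew bracoid relation using the relations in $(G,N)$ and $(S,M)$. Your transitivity argument is spelled out a bit more explicitly than the paper's, but the content is identical.
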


\begin{proof}
We can see that $\odot$ is an action from the fact that $\odot_N$ and $\odot_M$ are, and that $\pi$ is a homomorphism: for all $g_1,g_2\in G$, $\eta\in N$ and $\mu\in M$ we have
\begin{align*}
    g_1g_2\odot (\eta, \mu)&= (g_1g_2 \odot_N \eta, \pi(g_1g_2)\odot_M \mu) \\
                    &= (g_1\odot_N (g_2\odot_N \eta),\pi(g_1)\odot_M(\pi(g_2)\odot_M \mu)) \\
                    &= g_1\odot (g_2 \odot (\eta,\mu)),\\
    e_G\odot (\eta, \mu)&= (e_G\odot_N \eta, \pi(e_G)\odot_M \mu)\\
                    &= (\eta, e_S\odot_M \mu)                   \\
                    &= (\eta, \mu).
\end{align*} 
Recall $G=HS$, and observe that for $h\in H$ and $s\in S$ we have 
\begin{equation}\label{indact}
hs \odot (e_N,e_M)= (h\odot_N e_N, s\odot_M e_N);    
\end{equation}
then since $H$ is regular on $N$ via $\odot_N$ and $S$ is transitive on $M$ via $\odot_M$, the action $\odot$ of $G$ on $N\times M$ is also transitive.

It remains to verify the skew bracoid relation. Let $(\eta_1,\mu_1),(\eta_2,\mu_2)\in N\times M$ and $g\in G$, then using that the skew bracoid relation holds in $(G,N)$ and $(S,M)$ we have
\begin{align*}
    g \;\odot &\;((\eta_1,\mu_1)(\eta_2,\mu_2))                                     \\ 
                &= g\odot (\eta_1\eta_2,\mu_1\mu_2)                                 \\
                &= (g\odot_N (\eta_1\eta_2),\pi(g)\odot_M (\mu_1\mu_2))             \\
                &= ((g\odot_N\eta_1)(g\odot_N e_N)^{-1}(g\odot_N \eta_2),           \\
                        & \hspace{50px} (\pi(g)\odot_M\mu_1)(\pi(g)\odot_M e_M)^{-1}(\pi(g)\odot_M \mu_2))                                                     \\
                &= (g\odot_N\eta_1,\pi(g)\odot_M \mu_1)(g\odot_N e_N,\pi(g)\odot_M e_M)^{-1}(g\odot_N \eta_2,\pi(g)\odot_M\mu_2)                                               \\
                &= (g\odot (\eta_1,\mu_1))(g\odot (e_N,e_M))^{-1}(g\odot (\eta_2,\mu_2)),
\end{align*}
as required.
\end{proof}

Note that the stabiliser, $\Stab_G(e_N,e_M)$, in the induced skew bracoid coincides with $\Stab_S(e_M)$ in $(S,M)$. To see this let $h\in H$ and $s\in S$, then from (\ref{indact}) and the fact that $H$ acts regularly on $N$, we have that $hs\in \Stab_G(e_N,e_M)$ if and only if $h=e_G$ and $s\in \Stab_S(e_M)$, i.e if and only if $hs=s\in \Stab_S(e_M)$.

\begin{eg}
    Consider the skew bracoid $(G,N)\cong (D_{24},C_4)$ from \Cref{d2ncd}, here $S=\langle r^4,s\rangle$ which has $R=\langle r^3\rangle$ as a normal complement. Since $S\cong D_6$ we can take $M$ to be cyclic of order $3$ say, with generator $\mu$, and use $(S,M)\cong (D_6,C_3)$ from the same family. Noting that $\pi(r^is^j)= \pi(r^{4i-3i}s^j)=\pi(r^{4i}s^jr^{-3(-1)^ji})=r^{4i}s^j$, the action of $G$ on $N\times M$ is then given by
    \[ r^is^j \odot \eta^k\mu^\ell = \eta^{i+(-1)^jk}\mu^{4i+(-1)^j\ell}=\eta^{i+(-1)^jk}\mu^{i+(-1)^j\ell}.\]
    Notice that $N\times M$ is cyclic of order $12$ with generator $\eta\mu$, and $r^is^j \odot (\eta\mu)^k=(\eta\mu)^{i+(-1)^jk},$
    so our induced skew bracoid is of the form $(D_{24},C_{12})$.
\end{eg}

It is natural to consider which of our properties are invariant under this construction. Taking two (compatible) almost classical skew bracoids, the induced skew bracoid is not necessarily itself almost classical. This is perhaps easiest to see by taking an almost classical skew bracoid $(G,N)$ with the trivial skew brace on $S$, which we recall is almost classical. The resulting skew bracoid has multiplicative group $G\cong H\rtimes S$ and additive group $N\times S \cong H\times S$, hence whenever the action of $S$ on $H$ in $H\rtimes S$ is non-trivial, the skew bracoid $(G,N\times S)$ will be non-trivial as a skew brace and therefore not almost classical. However, we can answer in the affirmative for skew bracoids that are almost a brace or contain a brace.

\begin{prop}
\label{respind}
    Let $(G,N)$ be a skew bracoid that is almost a brace with respect to $H$ and write $S$ for $\Stab_G(e_N)$. Suppose $(S,M)$ is a skew bracoid that contains a brace (resp. is almost a brace) with respect to $R$. Then the induced skew bracoid $(G,N\times M)$ contains a brace (resp. is almost a brace) with respect to $HR$.
\end{prop}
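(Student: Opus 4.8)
The plan is to deduce everything from the exact factorisation $G = H \rtimes S$ together with the factorisation of $S$ supplied by $(S,M)$, using the identification of the stabiliser in the induced skew bracoid recorded after \Cref{induce}. Set $T := \Stab_G(e_N,e_M)$; by that remark $T = \Stab_S(e_M)$, so $T \leq S \leq G$. First I would note that $HR$ is a subgroup of $G$: since $H$ is a normal complement of $S$ it is normal in $G$, and $R \leq S \leq G$, so the product $HR$ is a subgroup. For the ``almost a brace'' half I would strengthen this by observing that, writing $\pi \colon G \to S$ for the projection along $H$ (a homomorphism with kernel $H$), we have $HR = \pi^{-1}(R)$, so that $R \trianglelefteq S$ forces $HR \trianglelefteq G$.

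For the ``contains a brace'' assertion it then remains to check that $G = (HR)T$ is an exact factorisation. The factorisation itself is immediate: $G = HS$, and as $(S,M)$ contains a brace with respect to $R$ we have $S = RT$, hence $G = H(RT) = (HR)T$. For exactness I would verify $HR \cap T = \{e_G\}$: if $hr \in T$ with $h \in H$ and $r \in R$, then $hr \in S$ and $r \in S$ force $h = (hr)r^{-1} \in H \cap S = \{e_G\}$, so $hr = r \in R \cap T = \{e_S\}$ because $S = RT$ is already exact. Since a skew bracoid contains a brace with respect to a subgroup exactly when the multiplicative group factors exactly as that subgroup times the stabiliser of the additive identity, this shows $(G, N\times M)$ contains a brace with respect to $HR$.

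The ``almost a brace'' conclusion follows by combining the two paragraphs: $HR \trianglelefteq G$ and $G = (HR)T$ with $HR \cap T = \{e_G\}$, i.e.\ $HR$ is a normal complement of $\Stab_G(e_N,e_M)$ in $G$, which is the definition of $(G, N\times M)$ being almost a brace with respect to $HR$. I do not anticipate a serious obstacle; the only computation requiring care is $HR \cap T = \{e_G\}$, which chains the exactness of $G = HS$ with that of $S = RT$, and one must also invoke the fact established before the proposition that the stabiliser of $(e_N,e_M)$ in the induced structure is genuinely $\Stab_S(e_M)$ rather than something larger inside $G$.
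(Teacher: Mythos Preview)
Your argument is correct and follows the same overall route as the paper: identify $\Stab_G(e_N,e_M)=\Stab_S(e_M)$, show $G=(HR)\Stab_S(e_M)$ is an exact factorisation, and then upgrade to normality of $HR$ when $R\trianglelefteq S$. The technical details differ slightly: for exactness the paper argues via the action (if $hr\in\Stab_G(e_N,e_M)$ then $(h\odot_N e_N,r\odot_M e_M)=(e_N,e_M)$, whence $h=r=e$ by regularity), whereas you chain the two group-theoretic exact factorisations $H\cap S=\{e\}$ and $R\cap \Stab_S(e_M)=\{e\}$; for normality the paper does a direct conjugation computation $h_1s\cdot h_2r\cdot s^{-1}h_1^{-1}\in HR$, whereas your observation $HR=\pi^{-1}(R)$ gives it in one line. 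Both variants are valid and neither introduces any gap.
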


\begin{proof}
    Write $S_M$ for $\Stab_S(e_M)$ which we have shown to coincide with $\Stab_G(e_N,e_M)$. Now suppose that $(S,M)$ contains a brace with respect to $R$, so that $S$ has the exact factorisation $RS_M$. Then $G=HS=HRS_M$, and the factorisation $(HR)S_M$ is exact because for $h\in H$ and $r\in R$, $hr\odot (e_N,e_M)= (h\odot_N e_N, r\odot_M e_M) = (e_N,e_M)$ implies $h=r=e_G$ by regularity, so $HR$ has trivial intersection with $S_M$.

    Suppose additionally that $R$ is normal in $S$ so that $(S,M)$ is almost a brace with respect to $R$, we claim that $HR$ is then normal in $G$. Let $h_1,h_2\in H$, $s\in S$ and $r\in R$ then
    \begin{align*}
        h_1s\cdot h_2r \cdot s^{-1}h_1^{-1} &= h_1sh_2 s^{-1} s rs^{-1} h_1^{-1} \\
                            &= h_1\cdot sh_2s^{-1} \cdot (srs^{-1})h_1^{-1}(sr^{-1}s^{-1}) \cdot srs^{-1}   \\
                            &\in HR,
    \end{align*}
    using the normality of $H$ in $G$ and $R$ in $S$. Hence $(G,N\times M)$ is almost a brace with respect to $HR$.
\end{proof}

With this construction we can say that all skew bracoids that are almost a brace occur as an additive quotient of a skew brace by a strong left ideal as outlined in \cite[Proposition 2.4]{mltruoid}. Notice that the following holds without imposing the condition that the skew bracoid is reduced.

\begin{prop}
Let $(G,N)$ be a skew bracoid that is almost a brace. There is a further operation $\star$ on $G$ such that $(G,\star,\cdot)$ is a skew brace which contains $S$ as a strong left ideal giving $(G,N)$ isomorphic to $(G,G/S)$ in the sense of \cite[Section 4]{mltruoid}.
\end{prop}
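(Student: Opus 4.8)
The plan is to realize the abstract skew bracoid $(G,N)$ as a concrete additive quotient of a skew brace on $G$ itself, using the induced-skew-bracoid machinery of \Cref{induce} run in reverse. Since $(G,N)$ is almost a brace with respect to some $H$, we have $G = H \rtimes S$ with $H$ acting regularly on $N$; transferring the group operation of $N$ onto $H$ via the bijection $h \mapsto h \odot e_N$ gives $H$ a second group structure, call it $+_H$, making $(H, +_H, \cdot)$ a skew brace (this is the ``essentially a skew brace'' observation already recorded in the excerpt). The idea is to build a group operation $\star$ on the set $G = H \times S$ out of $+_H$ on the $H$-factor and the existing $\cdot$ on the $S$-factor (a direct product), and then check that $(G, \star, \cdot)$ is a skew brace.

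First I would define $\star$ explicitly: writing each element of $G$ uniquely as $hs$ with $h \in H$, $s \in S$, set $(h_1 s_1) \star (h_2 s_2) := (h_1 +_H h_2)(s_1 \cdot s_2)$. So $(G, \star)$ is by construction the external direct product $(H, +_H) \times (S, \cdot)$, hence a group with identity $e_G$. Next I would verify the skew brace relation $g \cdot (x \star y) = (g \cdot x) \star g^{-\star} \star (g \cdot y)$ for all $g, x, y \in G$. Writing $g = h s$, $x = h_1 s_1$, $y = h_2 s_2$ and expanding both sides in terms of the semidirect product structure of $(G,\cdot)$ and the definition of $\star$, the $S$-components will match because $(S,\cdot)$ sits inside both structures compatibly, and the $H$-components will match precisely because $(H, +_H, \cdot)$ already satisfies its own skew brace relation together with the fact that conjugation by $S$ acts on $H$ by $\cdot$-automorphisms that are also $+_H$-automorphisms — this last point is where \Cref{acgamma}-type bookkeeping enters, or more directly the observation that conjugation by $s \in S$ on $H$ corresponds under $h \mapsto h\odot e_N$ to the map $\eta \mapsto s \odot \eta$ on $N$, which is an additive automorphism of $N$ by the skew bracoid relation applied to $s$ (noting $s \odot e_N = e_N$). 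I would isolate this as the key lemma of the argument.

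With $(G, \star, \cdot)$ a skew brace in hand, I would then check that $S$ is a strong left ideal. It is a subgroup of both $(G,\star)$ and $(G,\cdot)$; it is a $\cdot$-normal subgroup (being the kernel of $\pi: G \to S$, or rather: we need normality of $S$ in the sense required — actually $S$ need not be $\cdot$-normal, so here I would instead verify the ideal conditions directly, namely that $S$ is $\star$-normal, which is clear since $(G,\star) = (H,+_H)\times(S,\cdot)$ has $S = \{e_H\}\times S$ as a direct factor hence $\star$-normal, and that $\gamma(g)$ fixes $S$ setwise for all $g \in G$, i.e. $g^{-\star} \star (g \cdot s) \in S$; using $g \cdot s \in HS$ with $H$-component controlled by the regular action one gets $g \cdot s = (\text{something in }H) \cdot s'$... ). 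I would then confirm that the quotient construction of \cite[Proposition 2.4]{mltruoid} applied to $(G,\star,\cdot)$ and the strong left ideal $S$ returns a skew bracoid whose multiplicative group is $G$, whose additive group is $G/S$ (as a $\star$-coset space, which is naturally identified with $(H,+_H) \cong N$), and whose action is $g \odot (xS) = (g \cdot x)S$; tracing this identification through shows it agrees with the original $\odot$, so $(G,N) \cong (G, G/S)$ in the sense of \cite[Section 4]{mltruoid}.

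The main obstacle I expect is the bookkeeping in verifying the skew brace relation for $\star$ — specifically making sure that the cross-terms between the $H$-part and the $S$-part behave, which amounts to checking that $S$-conjugation respects $+_H$; this is true but needs the skew bracoid relation for elements of $S$ (which act on $N$ fixing $e_N$) and a clean translation between ``conjugation in $G$'' and ``the $\odot$-action'', essentially the content of \Cref{acgamma} specialized to $h_1 = e_G$. A secondary subtlety is checking the strong left ideal conditions for $S$ precisely as formulated in \cite[Proposition 2.4]{mltruoid}, since $S$ is $\star$-normal but generally \emph{not} $\cdot$-normal, so one must use exactly the (weaker) hypotheses that proposition requires rather than assuming $S$ is an ideal in the strong two-sided sense.
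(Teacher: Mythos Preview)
Your proposal is correct and produces exactly the same operation $\star$ as the paper, but the paper takes a shortcut you may find instructive. Rather than verifying the skew brace relation for $(G,\star,\cdot)$ by hand, it invokes \Cref{induce} directly, taking the trivial skew brace on $S$ as the second input $(S,M)$. This yields a skew bracoid $(G, N\times S, \odot)$ whose stabiliser is $\Stab_S(e_S)=\{e_G\}$, so it is essentially a skew brace; transferring the additive operation across the bijection $hs \mapsto (h\odot_N e_N, s)$ gives precisely your $\star$. Because \Cref{induce} already establishes the skew bracoid relation for $(G, N \times S)$, the skew brace relation for $(G, \star, \cdot)$ comes for free, and your key lemma (that $S$-conjugation on $H$ is a $+_H$-automorphism, via $shs^{-1}\odot e_N = s\odot(h\odot e_N)$ and $s\odot e_N=e_N$) never needs to be isolated or checked separately. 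The strong-left-ideal verification is then very short: the paper computes $\gamma(g)(s)=s$ for all $g\in G$ and $s\in S$, so $S$ is in fact fixed pointwise by $\gamma(G)$, which also resolves the point where your sketch trails off. Your route is valid and more self-contained; the paper's route illustrates why the induced construction was developed.
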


\begin{proof}
The skew bracoid $(G,N)$ is almost a brace so $G=HS\cong H\rtimes S$ for $S=\Stab_G(e_N)$ and $H$ some normal complement to $S$. We may take the trivial skew brace on $S$ and apply \Cref{induce} to obtain a skew bracoid $(G,N\times S)$. The stabiliser $\Stab_G(e_N,e_S)$ then coincides with $\Stab_S(e_S)=\{e_S\}$, so $(G,N\times S)$ is essentially a skew brace and the map $hs\mapsto hs\odot (e_N,e_S)=(h\odot e_N,s)$ between $G$ and $N\times S$ is a bijection. We use this to implicitly define an operation $\star$ on $G$ by
\begin{align}
    (h_1s_1\star h_2s_2)\odot (e_N,e_S)&= (h_1\odot_N e_N,s_1)(h_2\odot_N e_N,s_2) \nonumber \\
                                    &= ((h_1\odot_N e_N)(h_2\odot_N e_N),s_1s_2)\label{sbop}
\end{align} 
for all $h_1,h_2\in H$ and all $s_1,s_2\in S$, so that $(G,\star,\cdot)$ is a skew brace with $(G,\star)\cong N\times S$.

As the induced operation on $N\times S$ is the direct product, $S$ is a normal subgroup of $G$ under $\star$. Also, for $h_1\in H$ and $s_1,s_2 \in S$
\begin{align*}
    \acts{\gamma(h_1s_1)}s_2 &= (h_1s_1)^{-\star}\star(h_1s_1\cdot s_2) &&    \\
                             &= (h_1s_1)^{-\star}\star(h_1s_1)\star s_2 &&\text{from (\ref{indact}) and (\ref{sbop}) with $h_2=e_G$}    \\
                             &= s_2,
\end{align*}
so $S$ is closed under the $\gamma$-function in $(G,\star,\cdot)$, in fact $\gamma(G)$ is trivial on $S$. Hence, $S$ is a strong left ideal of $(G,\star,\cdot)$ and we may follow \cite[Proposition 2.4]{mltruoid} to form a skew bracoid $(G,G/S)$ as expected. Recall that the action of $G$ on $G/S$ here is left translation of cosets via $\cdot$.

To provide a skew bracoid isomorphism between $(G,N)$ and $ (G,G/S)$ we take identity map on $G$, which is a group isomorphism and note $\Stab(e_N)=S=\Stab(e_{G/S})$. This gives rise to the map $g\odot e_N \mapsto gS$ between $N$ and $G/S\cong(N\times S)/S$, which is an isomorphism under $\star$ by construction. Hence $(G,N)\cong (G,G/S)$ as skew bracoids.
\end{proof}

\section{The view from the Holomorph}
\label{sechol}

We know from \cite[Theorem 2.8]{mltruoid} that there is a correspondence between skew bracoids $(G,N)$ and transitive subgroups $A$ of $\Hol(N)$. This is via the homomorphism $\lambda_\odot:G\to \Perm(N)$, given by $\lambda_\odot(g)[\eta]=g\odot \eta$ for $g\in G$ and $\eta\in N$, whose image is contained in $\Hol(N)$. There, $\Hol(N)$ was viewed as the normaliser of $\lambda_\star(N)$ in $\Perm(N)$, we wish to consider this correspondence using the abstract formulation of $\Hol(N)$. That is $N\rtimes \Aut(N)$, where $(\eta_1,\theta_1)(\eta_2,\theta_2)=(\eta_1\theta_1(\eta_2),\theta_1\theta_2)$, which has a natural action on $N$ given by $(\eta,\theta)\mu=\eta\theta(\mu)$. Note that for all $\eta\in N$ and all $g\in G$, 
\begin{equation}\label{biggamma}
\lambda_\odot(g)[\eta]= g\odot \eta= (g\odot e_N)\acts{\gamma(g)}\eta= (g\odot e_N,\gamma(g))\eta,
\end{equation}
so we may consider $\lambda_\odot$ to be the map taking $g$ to $(g\odot e_N,\gamma(g))$.

For simplicity, for the bulk of this section skew bracoids are assumed to be reduced meaning that $\lambda_\odot$ is an injection. To this end, we have the following:

\begin{prop}
\label{respred}
Let $(G,N)$ be a skew bracoid, $S=\Stab_G(e_N)$, and $K=\ker(\lambda_\odot)$, so that $(G/K,N)$ is the \textit{reduced form} of $(G,N)$ (see \cite[Proposition 2.18]{mltruoid}). If $(G,N)$ contains a brace (resp. is almost a brace, is almost classical) then $(G/K,N)$ contains a brace (resp. is almost a brace, is almost classical).
\end{prop}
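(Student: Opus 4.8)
The plan is to push the chosen complement through the quotient map $q\colon G\to G/K$ and check that each of the three properties survives. First I would record the elementary structure of $K$. From \Cref{biggamma}, $\lambda_\odot(g)=(g\odot e_N,\gamma(g))$, so any $g\in K$ has $g\odot e_N=e_N$; hence $K\subseteq S$, and $K$ is normal in $G$ as the kernel of a homomorphism. Consequently $\Stab_{G/K}(e_N)=\{gK : g\in S\}=S/K$, and the action of $G/K$ on $N$ in the reduced form is the descent of $\odot$, so that $gK\odot\eta=g\odot\eta$ for all $g\in G$ and $\eta\in N$.

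Next, suppose $(G,N)$ contains a brace with respect to $H$, i.e.\ $G=HS$ with $H\cap S=\{e_G\}$. I would take $\overline H:=q(H)=HK/K\leq G/K$. Since $K\subseteq S$ we have $H\cap K\subseteq H\cap S=\{e_G\}$, so $q$ restricts to an isomorphism $H\to\overline H$. Then $\overline H\,(S/K)=q(H)q(S)=q(HS)=q(G)=G/K$, and if $q(h)=q(s)$ with $h\in H$, $s\in S$, then $h\in sK\subseteq S$ forces $h=e_G$; thus $G/K=\overline H\,(S/K)$ is an exact factorisation and $(G/K,N)$ contains a brace with respect to $\overline H$. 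If in addition $H$ is normal in $G$, then $\overline H=q(H)$ is normal in $q(G)=G/K$, so $(G/K,N)$ is almost a brace with respect to $\overline H$.

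Finally, suppose $(G,N)$ is almost classical with respect to $H$, so that \Cref{niceop} holds. Since the action descends, $q(h)\odot e_N=h\odot e_N$ for $h\in H$, so for all $h_1,h_2\in H$ we get $(q(h_1)\odot e_N)(q(h_2)\odot e_N)=(h_1\odot e_N)(h_2\odot e_N)=h_1h_2\odot e_N=(q(h_1)q(h_2))\odot e_N$, which is precisely \Cref{niceop} for $\overline H$ inside $(G/K,N)$; hence $(G/K,N)$ is almost classical with respect to $\overline H$. Alternatively, one can observe that $\overline{\gamma}(q(h))$ is the permutation of $N$ induced by $\gamma(h)$, which is trivial by \Cref{acgamma}, and invoke \Cref{gamker}.

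I do not expect a genuine obstacle here: the one point that makes everything work is the containment $K\subseteq S$, which simultaneously identifies the new stabiliser as $S/K$, guarantees that $q$ is injective on $H$, and keeps the factorisation exact. Beyond this bookkeeping the argument is routine.
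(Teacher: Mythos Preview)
Your argument is correct and follows the same route as the paper: both use $K\subseteq S$ to identify the stabiliser as $S/K$, push $H$ through the quotient to $HK/K$, check that the factorisation and normality survive, and use $H\cap K=\{e_G\}$ to see that the sub-skew bracoid on the image of $H$ is isomorphic to $(H,N)$ and hence essentially trivial. Your write-up is simply more explicit than the paper's (you verify exactness directly and spell out why \Cref{niceop} descends), but there is no substantive difference in approach.
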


\begin{proof}
Suppose $(G,N)$ is contains a brace $(H,N)$, so that $G= HS$, then as $K\subseteq S$ we have $G/K= ( H/K)( S/K)$. As $\Stab_{G/K}(e_N)=S/K$, it follows immediately that $(G/K,N)$ contains a brace. If $H$ is additionally normal in $G$ so that $(G,N)$ is almost a brace with respect to $H$, then $H/K$ is also normal in $G/K$ so $(G/K,N)$ is almost a brace with respect to $H/K$. Finally, if $(G,N)$ is almost classical with respect to $H$, then since $H\cap K$ is trivial, $H/K\subseteq G/K$ is isomorphic to $H$ itself, and thus $(H/K,N)\cong (H,N)$ is also essentially trivial.
\end{proof}

This means that it would be possible dispense with the assumption that a skew bracoid is reduced for results in which the properties of a skew bracoid imply those of a subgroup of the holomorph. However, the assumption is required in the reverse as the converse does not hold in general: recall from \Cref{d2ncd} that $(D_{18}, C_3)$ does not contain a brace while the reduced form $(D_6,C_3)$ is almost classical. With this caveat, we can restrict our correspondence between skew bracoids and subgroups of the holomorph to the almost a brace case fairly cleanly as follows: 

\begin{prop}\label{alalhol}
Let $(G,N,\odot)$ be a skew bracoid and let $A=\lambda_\odot(G) \subseteq \Hol(N)$. The following are equivalent:
\begin{enumerate}[(i)]
    \item $A$ is of the form $R\rtimes B$ where $R$ is a regular subgroup of $\Hol(N)$ and $B$ is some subgroup of $\Aut(N)$; \label{holtosbac}
    \item the skew bracoid $(G,N,\odot)$ is almost a brace. \label{sbtoholac}
\end{enumerate}
\end{prop}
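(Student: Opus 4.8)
The plan is to exploit the explicit description of $\lambda_\odot$ from \eqref{biggamma}, namely $\lambda_\odot(g) = (g \odot e_N, \gamma(g))$, to translate the group-theoretic factorisation of $G$ into a factorisation of $A$ inside $\Hol(N)$, and back. Throughout, I would first reduce to the reduced case using \Cref{respred}: if $(G,N)$ is almost a brace then so is its reduced form, and $A \cong G/K$; conversely, a factorisation of $A$ as in \eqref{holtosbac} is a statement purely about $A$, and pulling it back along $\lambda_\odot$ to $G$ is what we need. So it suffices to prove the equivalence assuming $\lambda_\odot$ is injective, identifying $G$ with $A$, $S$ with $\Stab_A(e_N) = A \cap \Aut(N)$ (the stabiliser of $e_N$ in $\Hol(N)$ being exactly $\Aut(N)$), and $H$ with a regular subgroup $R = \lambda_\odot(H)$.

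For \eqref{sbtoholac} $\Rightarrow$ \eqref{holtosbac}: suppose $(G,N)$ is almost a brace with respect to $H$, so $G = H \rtimes S$. Set $R = \lambda_\odot(H)$ and $B = \lambda_\odot(S)$. Since $H$ acts regularly on $N$, $R$ is a regular subgroup of $\Hol(N)$; since $S = \Stab_G(e_N)$, every element of $S$ fixes $e_N$, so $\lambda_\odot(s) = (e_N, \gamma(s)) \in \Aut(N)$, giving $B \subseteq \Aut(N)$. The semidirect product structure $A = R \rtimes B$ follows because $\lambda_\odot$ is a homomorphism carrying the internal semidirect product $H \rtimes S$ onto $RB$, with $R$ normal in $A$ (image of the normal subgroup $H$) and $R \cap B = \lambda_\odot(H \cap S) = \{e\}$ in the reduced case. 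For the general (non-reduced) case, this is exactly where \Cref{respred} does the work: $(G/K, N)$ is almost a brace, and $\lambda_\odot$ factors through an injection on $G/K$ with the same image $A$.

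For \eqref{holtosbac} $\Rightarrow$ \eqref{sbtoholac}: suppose $A = R \rtimes B$ with $R$ regular and $B \subseteq \Aut(N)$. First I would argue $B = A \cap \Aut(N) = \Stab_A(e_N)$: clearly $B \subseteq A \cap \Aut(N)$, and conversely if $a \in A$ fixes $e_N$, write $a = \rho\beta$ with $\rho \in R$, $\beta \in B$; since $\beta$ fixes $e_N$ and $a$ fixes $e_N$, also $\rho = a\beta^{-1}$ fixes $e_N$, but $R$ is regular so $\rho = e$, hence $a = \beta \in B$. Now pull back: $H := \lambda_\odot^{-1}(R)$ is a subgroup of $G$ with $\lambda_\odot(H) = R$ regular, so $H$ acts transitively on $N$ and $\Stab_H(e_N) = H \cap S = \lambda_\odot^{-1}(R \cap B) \cdot K = \lambda_\odot^{-1}(R) \cap \lambda_\odot^{-1}(B)$; using $R \cap B = \{e\}$ this forces $H \cap S = K = \ker(\lambda_\odot)$. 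In the reduced case $K$ is trivial so $H \cap S = \{e_G\}$ and $HS = G$ (since $\lambda_\odot(HS) = RB = A$ and $\lambda_\odot$ injective), and normality of $R$ in $A$ pulls back to normality of $H$ in $G$; hence $G = H \rtimes S$ and $(G,N)$ is almost a brace. The non-reduced case is handled by noting $H \supseteq K$ and passing to $G/K$, where the reduced argument applies, then observing that $\Stab_G(e_N) = S$ has $H$ as a complement since $K \subseteq H$ and $K \subseteq S$ with $H/K$ a normal complement to $S/K$.

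The main obstacle I anticipate is the bookkeeping around the kernel $K$ in the non-reduced direction \eqref{holtosbac} $\Rightarrow$ \eqref{sbtoholac}: a factorisation of the \emph{image} $A$ does not immediately see the kernel, and one must check that the preimage $H = \lambda_\odot^{-1}(R)$ genuinely supplies a \emph{normal} complement to $S$ in $G$ rather than merely in $G/K$ — this requires verifying $K \subseteq H$ (automatic, since $\lambda_\odot(K) = \{e\} \subseteq R$) and then that $H \cap S = K$ together with $HS = G$ lifts correctly. Everything else is a routine transcription between the group $G$ and the permutation picture via \eqref{biggamma}.
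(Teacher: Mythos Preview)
Your argument in the reduced case is correct and essentially matches the paper's proof; you add the explicit verification that $B = \Stab_A(e_N) = A \cap \Aut(N)$, which the paper leaves implicit, but otherwise the route is the same. Note that the paper declares, just before this proposition, that skew bracoids in this section are assumed reduced, so the reduced case is all that is actually required.

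Your attempt to extend to the non-reduced case, however, contains a genuine error. In the direction \eqref{holtosbac} $\Rightarrow$ \eqref{sbtoholac} you correctly compute that $H \cap S = K$ for $H = \lambda_\odot^{-1}(R)$, but then suggest this makes $H$ a normal complement to $S$ in $G$. It does not: a complement requires $H \cap S = \{e_G\}$, and whenever $K$ is nontrivial this fails. Your closing claim---that a normal complement $H/K$ to $S/K$ in $G/K$ lifts to a normal complement $H$ to $S$ in $G$---is false for precisely this reason. In fact the equivalence genuinely breaks without the reduced hypothesis: the paper notes (immediately before the proposition) that $(D_{18},C_3)$ does not even contain a brace, whereas its reduced form $(D_6,C_3)$ is almost classical. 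Thus $A = \lambda_\odot(D_{18}) \cong D_6$ has the form $N \rtimes B$, so \eqref{holtosbac} holds, yet $(D_{18},C_3)$ is not almost a brace, so \eqref{sbtoholac} fails. The reduced assumption is therefore essential for this direction, not merely a bookkeeping convenience.
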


\begin{proof}
Suppose $A$ is of the form given in (\ref{holtosbac}). Elements of $\Hol(N)$ stabilise $e_N$ precisely when they have trivial $N$ component, so $S:=\Stab_G(e_N)$ is equal to the preimage $\lambda_\odot^{-1}(B)$. We also have that $H:=\lambda_\odot^{-1}(R)$ is a normal subgroup of $G$ which has trivial intersection with $S$, as $R$ does with $B$. Hence $G=HS\cong H\rtimes S$ and $(G,N,\odot)$ is almost a brace.

Suppose the skew bracoid $(G,N,\odot)$ is almost a brace with respect to $H$ a normal complement to $S=\Stab_G(e_N)$. Recall that this means $H$ is regular on $N$ so in view of \Cref{biggamma}, we know that $R:=\lambda_\odot(H)$ is regular on $N$. For $s\in S$ we have 
\[\lambda_\odot(s)=(s\odot e_N,\gamma(s))=(e_N,\gamma(s)),\] 
so $\lambda_\odot(S)=(e_N,\gamma(S))$, and we may take $B$ to be $\gamma(S)\subseteq \Aut(N)$. Since $G=HS\cong H\rtimes S$, its image under the injection $\lambda_\odot$ must have $\lambda_\odot(H)\lambda_\odot(S)=RB \cong R\rtimes B$. 
\end{proof}

In the almost classical case we have an even cleaner description.

\begin{prop}\label{alhol}
    Let $(G,N,\odot)$ be a skew bracoid and let $A=\lambda_\odot(G) \subseteq \Hol(N)$. The following are equivalent:
    \begin{enumerate}[(i)]
        \item $A$ is of the form $N\rtimes B$ for some subgroup $B$ of $\Aut(N)$;\label{holnb}
        \item $A$ contains $(N, \id)$;\label{ninhol}
        \item the skew bracoid $(G,N,\odot)$ is almost classical.\label{alcla}
\end{enumerate}
\end{prop}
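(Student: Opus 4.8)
The plan is to run the cycle (\ref{ninhol}) $\Rightarrow$ (\ref{holnb}) $\Rightarrow$ (\ref{alcla}) $\Rightarrow$ (\ref{ninhol}), working throughout inside $\Hol(N)=N\rtimes\Aut(N)$ with the description $\lambda_\odot(g)=(g\odot e_N,\gamma(g))$ from (\ref{biggamma}) and the section's standing assumption that $\lambda_\odot$ is injective.

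For (\ref{ninhol}) $\Rightarrow$ (\ref{holnb}), suppose $(N,\id)\subseteq A$ and set $B=\{\theta\in\Aut(N):(e_N,\theta)\in A\}$, which is readily checked to be a subgroup of $\Aut(N)$. Given any $(\eta,\theta)\in A$, the element $(\eta,\id)$ lies in $(N,\id)\subseteq A$, so $(\eta,\id)^{-1}(\eta,\theta)=(e_N,\theta)\in A$ and hence $\theta\in B$; thus $(\eta,\theta)=(\eta,\id)(e_N,\theta)$ lies in $(N,\id)(e_N,B)$. As the reverse inclusion is clear and $(N,\id)$ is normal in $\Hol(N)$ with $(N,\id)\cap(e_N,B)=\{(e_N,\id)\}$, we obtain $A=N\rtimes B$.

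The implication (\ref{holnb}) $\Rightarrow$ (\ref{alcla}) carries the weight. If $A=N\rtimes B$ then $A$ has the form given in (\ref{holtosbac}) of \Cref{alalhol} (with regular subgroup $R=(N,\id)$), so \Cref{alalhol} tells us $(G,N)$ is almost a brace; inspecting that proof, the pertinent normal complement to $S=\Stab_G(e_N)$ is $H=\lambda_\odot^{-1}((N,\id))$. Every $h\in H$ then satisfies $(h\odot e_N,\gamma(h))=\lambda_\odot(h)\in(N,\id)$, which forces $\gamma(h)=\id$; hence $H\subseteq\ker(\gamma)$, and \Cref{gamker} gives that $(G,N)$ is almost classical with respect to $H$.

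For (\ref{alcla}) $\Rightarrow$ (\ref{ninhol}), let $(G,N)$ be almost classical with respect to $H$. Then $H$ acts regularly on $N$ and, as noted after \Cref{acgamma}, $\gamma(h)=\id$ for all $h\in H$, so $\lambda_\odot(h)=(h\odot e_N,\id)$; since $h\odot e_N$ ranges over all of $N$ this gives $\lambda_\odot(H)=(N,\id)$, whence $(N,\id)\subseteq\lambda_\odot(G)=A$. As a by-product, $A=\lambda_\odot(H)\lambda_\odot(S)=(N,\id)(e_N,\gamma(S))=N\rtimes\gamma(S)$, which gives (\ref{holnb}) directly as well. The only spot demanding attention is the location of $H$ in the middle implication, and this is exactly what \Cref{alalhol} provides; the genuinely new input is simply that $\lambda_\odot(H)\subseteq(N,\id)$ makes $\gamma$ trivial on $H$, which is precisely the hypothesis of \Cref{gamker}.
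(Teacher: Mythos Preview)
Your proof is correct and follows essentially the same route as the paper: the same cycle of implications (in a different order, which is immaterial), the same decomposition argument for (\ref{ninhol}) $\Rightarrow$ (\ref{holnb}), and the same use of \Cref{alalhol} together with regularity of $H$ on $N$ for the remaining steps. The only cosmetic difference is that for (\ref{holnb}) $\Rightarrow$ (\ref{alcla}) the paper argues directly that the operation in $H\cong(N,\id)\subseteq\Hol(N)$ coincides with that of $N$, whereas you observe $\gamma(h)=\id$ for $h\in H$ and then invoke \Cref{gamker}; these are two phrasings of the same observation.
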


\begin{proof}
First suppose that (\ref{holnb}) holds. Then $N$, taken as a subgroup of its holomorph, is certainly regular on itself, so $A$ is of the form given in \Cref{alalhol}. Hence $(G,N,\odot)$ is almost a brace with respect to $H=\lambda_\odot^{-1}(N)$. The sub-skew bracoid $(H,N)$ is essentially trivial since the operation in $H \cong N\subseteq \Hol(N)$ agrees with that in $N$ itself, thus (\ref{alcla}) holds.

Next suppose (\ref{alcla}) holds with respect to some $H$, a normal complement to $S=\Stab_G(e_N)$. For $h\in H$ we have $\lambda_\odot(h)=(h\odot e_N,\gamma(h))$, by \Cref{acgamma} we know that $\gamma(h)$ is trivial. We also know that $H$ is regular on $N$, so
\[\lambda_\odot(H)=\{(h\odot e_N,\id)\;|\; h\in H\} = (N,\id) \]
and (\ref{ninhol}) holds. 

Finally suppose (\ref{ninhol}) holds and consider $B=A \;\cap\; (e_N, \Aut(N))$, the subgroup of $A$ consisting of its purely automorphism elements. Let $(\eta, \alpha)\in A$; as $(N,\id)\subseteq A$, we know that $(e_N,\alpha)= (\eta^{-1},\id)(\eta, \alpha)$ is in $A$ and therefore in $B$. We can then write $(\eta, \alpha)$ as a product of an element of $N$, $(\eta,\id)$, and an element of $B$, $(e_N,\alpha)$. It is clear that $N$ and $B$ have trivial intersection, and we know $N$ is normal in $A$ as it is normal in $\Hol(N)$, so $A=NB\cong N\rtimes B$ and we have (\ref{holnb}).
\end{proof}

We see this behaviour borne out in our example.

\begin{eg}
    Consider the skew bracoid $(G,N)\cong (D_{2n}, C_n)$ from \Cref{d2ncd}. This has $\gamma$-function given by $\acts{\gamma(r^is^j)}\eta^k=\eta^{(-1)^jk}$ so, writing $\iota$ for inversion, \[\lambda_\odot(r^is^j)= (r^is^j\odot e_N,\gamma_{r^is^j}) = (\eta^i,\iota^j)\] and $\lambda_\odot(G) =N\rtimes \langle \iota \rangle$.
\end{eg}

With this holomorph description, we can give an enumeration result for almost classical skew bracoids.

\begin{cor} \label{enuhol}
    Let $N$ be a group. Up to isomorphism and equivalence, the number of almost classical skew bracoids with $N$ as their additive group is equal to the number of conjugacy classes of subgroups of $\Aut(N)$.
\end{cor}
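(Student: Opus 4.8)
The plan is to combine the holomorph description of almost classical skew bracoids from \Cref{alhol} with the correspondence of \cite[Theorem 2.8]{mltruoid} between skew bracoids and transitive subgroups of $\Hol(N)$, and then to check that the parametrisation $B\mapsto N\rtimes B$ descends to a bijection at the level of conjugacy classes.

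First I would reduce to reduced skew bracoids. By \Cref{respred} the reduced form of an almost classical skew bracoid is again almost classical with the same additive group, and a skew bracoid and its reduced form lie in the same isomorphism-and-equivalence class by \cite[Proposition 2.18]{mltruoid}, so it suffices to count reduced almost classical skew bracoids with additive group $N$. For such a skew bracoid $\lambda_\odot$ is injective, so $(G,N,\odot)$ is determined by the subgroup $A=\lambda_\odot(G)\subseteq\Hol(N)$, and by \cite[Theorem 2.8]{mltruoid} the assignment $(G,N,\odot)\mapsto A$ induces a bijection between isomorphism-and-equivalence classes of reduced skew bracoids with additive group $N$ and conjugacy classes of transitive subgroups of $\Hol(N)$. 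By \Cref{alhol} the almost classical ones correspond exactly to those classes containing a subgroup of the form $N\rtimes B$ with $B\le\Aut(N)$ (equivalently, containing $(N,\id)$).

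It then remains to show that $B\mapsto N\rtimes B$ induces a bijection from conjugacy classes of subgroups of $\Aut(N)$ to conjugacy classes of subgroups of $\Hol(N)$ of that form. The assignment is injective before passing to classes, since $B=(N\rtimes B)\cap(e_N,\Aut(N))$, equivalently $B$ is the image of $N\rtimes B$ under $\Hol(N)\twoheadrightarrow\Hol(N)/N\cong\Aut(N)$. Because $N$ is normal in $\Hol(N)$, conjugating $N\rtimes B$ by any $(\eta,\theta)\in\Hol(N)$ yields a subgroup again containing $N$, hence of the form $N\rtimes B'$; computing modulo $N$ gives $B'=\theta B\theta^{-1}$. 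Hence $N\rtimes B_1$ and $N\rtimes B_2$ are conjugate in $\Hol(N)$ if and only if $B_1$ and $B_2$ are conjugate in $\Aut(N)$, and every $\Aut(N)$-conjugate of $B$ arises as such a $B'$ (conjugate by $(e_N,\theta)$). The same computation applies if the equivalence in \cite[Theorem 2.8]{mltruoid} is $\Aut(N)$-conjugacy rather than full $\Hol(N)$-conjugacy, since $\Aut(N)\le\Hol(N)$. Composing the two bijections yields the asserted count.

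The only genuinely delicate point is the first reduction step: one must pin down exactly which equivalence relation \cite[Theorem 2.8]{mltruoid} places on skew bracoids, and confirm that passing to reduced forms on the skew bracoid side and to conjugacy classes on the holomorph side are compatible with it — this is also what guarantees the count is finite, since unreduced almost classical skew bracoids with additive group $N$ exist in arbitrarily large $G$. Once that is settled, the remaining steps are the short verifications indicated above, and I would not expect any of them to present real difficulty.
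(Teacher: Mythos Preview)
Your proposal is correct and follows essentially the same route as the paper: use \Cref{alhol} to identify almost classical skew bracoids with subgroups of $\Hol(N)$ of the form $N\rtimes B$, and then check that $B\mapsto N\rtimes B$ respects the relevant conjugacy. The paper is slightly more direct on the point you flag as delicate: rather than first reducing via \Cref{respred} and then invoking \cite[Theorem 2.8]{mltruoid}, it uses the definition of equivalence \cite[Definition 2.17]{mltruoid} to say every almost classical skew bracoid is equivalent to one of the form $(N\rtimes B,N)$, and then cites \cite[Corollary 4.16]{mltruoid}, which gives the isomorphism criterion directly as $\Aut(N)$-conjugacy of the images in $\Hol(N)$---so your hedging between $\Hol(N)$- and $\Aut(N)$-conjugacy is unnecessary once the right result is cited.
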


\begin{proof}
    For each subgroup $B$ of $\Aut(N)$, we get a subgroup $A= N\rtimes B$ of $\Hol(N)$ that is transitive on $N$, which gives rise to a skew bracoid $(A,N)$ that is almost classical by \Cref{alhol}. Moreover, by \Cref{alhol} and the definition of equivalence \cite[Definition 2.17]{mltruoid}, every almost classical skew bracoid with additive group $N$ is equivalent to one of this form. Then from \cite[Corollary 4.16]{mltruoid} we know that two such skew bracoids, $(A,N)\cong(N\rtimes B,N)$ and $(A',N)\cong(N\rtimes B',N)$, are isomorphic if and only if there exists some $\theta\in \Aut(N)$ for which $A=\theta A' \theta^{-1}$; this occurs precisely when $B=\theta B'\theta^{-1}$ as $\theta(N)=N$ for all $\theta$. 
\end{proof}

\section{Connection with almost classical Hopf-Galois structures}\label{sechg}

In this section we confirm the connection between our definition and an existing concept in Hopf-Galois theory. We begin by reviewing the Hopf-Galois picture, starting long before the inception of skew bracoids with Greither and Pareigis theory. In their 1987 paper, Greither and Pareigis made the study of Hopf-Galois structures on finite separable extensions of fields a question of group theory \cite{gp}. Let $L/K$ be a such an extension with Galois closure $E$, write $G$ for $\Gal(E/K)$, $S$ for $\Gal(E/L)$ and $X$ for the coset space $G/S$. They show that there is a bijection between Hopf-Galois structures on $L/K$ and regular \textit{$G$-stable} subgroups $N$ of $\Perm(X)$; here $G$-stable is saying $N$ is normalised by $\lambda(G)$, left translation of cosets. They explicitly prove that every Hopf-Galois structure on $L/K$ is isomorphic to one of the form $E[N]^G$ for such an $N$, the so-called \textit{type} of the Hopf-Galois structure. 

As mentioned in \Cref{intro}, an extension is said to be almost classical if $S$ has a normal complement $H$ in $G$, so that $G$ decomposes into the semi-direct product $H\rtimes S$. While a Hopf-Galois structure is itself said to be \textit{almost classical} if it corresponds under the theorem of Greither and Pareigis to a subgroup of $\Perm(X)$ of the form \[\lambda(H)^{opp}:= \{\eta \in \Perm(X): \lambda(h)\eta=\eta\lambda(h) \text{ for all }h\in H\},\] for some normal complement $H$ of $S$. 

To give the correspondence between Hopf-Galois structures and skew bracoids, we recall that every skew bracoid is isomorphic to one of the form $(G,\cdot,X,\star,\odot)$ where $X=G/S$, $S=\Stab_G(e_X)$, the identity in the additive group is the identity coset $eS$, and $G$ acts on $X$ by left translation of cosets \cite[Example 4.9]{mltruoid}. For this section we work with skew bracoids using this presentation that are finite, meaning both the multiplicative group and the additive group are finite. With the notation as above, though allowing $E$ to be any Galois extension of $K$ containing $L$, \cite[Theorem 5.1]{mltruoid} gives a bijection between Hopf-Galois structures on $L/K$ and additive operations on $X$ that give a skew bracoid structure to $(G,X)$. (Note that the stabiliser $\Stab_G(e_X)$ in these skew bracoids coincides with the Galois group $\Gal(E/L)$.) Explicitly the Hopf-Galois structure associated to a skew bracoid $(G,X)$ is $E[X,\star]^{(G,\cdot)}$, where $G$ acts on $E$ by Galois automorphisms and on $X$ via the $\gamma$-function of the skew bracoid. The action of $E[X,\star]^{(G,\cdot)}$ on $L$ is given by \[
    \left(\sum_{\bar{g}\in X}c_{\bar{g}}\bar{g}\right)[t]= \sum_{\bar{g}\in X}c_{\bar{g}}\bar{g}(t) \text{   for all }t\in L \text{ \cite[Proposition 5.7]{mltruoid}}.
\] Note also that the skew bracoid is reduced (the action is faithful) if and only if the field $E$ is the Galois closure of $L/K$ \cite[Corollary 5.3]{mltruoid}.

Following immediately from this, we have:

\begin{cor}\label{skewonalmostext}
Suppose a skew bracoid $(G,X)$ corresponds to a Hopf-Galois structure on some separable extension of fields as above. The skew bracoid is almost a brace if and only if $S$ has a normal complement in $G$. If we assume additionally that $(G,X)$ is reduced, $(G,X)$ is almost a brace if and only if extension is almost classical. 
\end{cor}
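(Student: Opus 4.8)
The plan is to obtain both equivalences by unwinding the definitions across the correspondence recalled just above, as befits a corollary. For the first equivalence, recall that throughout this section a skew bracoid is presented as $(G,X)$ with $X=G/S$ and $S=\Stab_G(e_X)$, and that when $(G,X)$ arises from a Hopf-Galois structure on $L/K$ via \cite[Theorem 5.1]{mltruoid} this point stabiliser $S=\Stab_G(e_X)$ is literally $\Gal(E/L)$. By definition, $(G,X)$ is almost a brace if and only if $S=\Stab_G(e_X)$ has a normal complement $H$ in $G$; identifying $\Stab_G(e_X)$ with $S=\Gal(E/L)$ gives the first claim at once, with no hypothesis on $E$ beyond its being a Galois extension of $K$ containing $L$.

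For the second equivalence I would invoke \cite[Corollary 5.3]{mltruoid}: the skew bracoid $(G,X)$ is reduced precisely when $E$ is the Galois closure of $L/K$. Under this added hypothesis the triple $(G,S,E)=(\Gal(E/K),\Gal(E/L),E)$ is exactly the data in terms of which ``almost classical'' is defined for the extension, namely: $L/K$ is almost classical if and only if $S=\Gal(E/L)$ has a normal complement in $G=\Gal(E/K)$ with $E$ the Galois closure. Chaining this with the first equivalence yields that $(G,X)$ is almost a brace if and only if $S$ has a normal complement in $G$ if and only if $L/K$ is almost classical.

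There is no real obstacle here; the only point demanding care is the bookkeeping of identifications. One must be explicit that the subgroup $S$ occurring on the skew bracoid side, the point stabiliser $\Stab_G(e_X)$, coincides with $\Gal(E/L)$ under the correspondence, and that the adjective ``almost classical'' is attached to an extension relative to its Galois closure rather than to an arbitrary Galois $E\supseteq L$. This is precisely why the reducedness hypothesis is needed for the second equivalence but is irrelevant to the first.
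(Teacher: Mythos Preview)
Your proposal is correct and matches the paper's approach exactly: the paper gives no separate proof, simply stating that the corollary follows immediately from the preceding paragraph, which records that $\Stab_G(e_X)=\Gal(E/L)$ under the correspondence and that the skew bracoid is reduced if and only if $E$ is the Galois closure (\cite[Corollary 5.3]{mltruoid}). Your unwinding of the definitions, including the observation that reducedness is needed only for the second equivalence, is precisely the content the paper leaves implicit.
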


We turn now to the almost classical skew bracoid.

\begin{rmk}
    Suppose we have an almost classical skew bracoid written in the form $(G,X)$ as above. Since we have fixed the action, the bijection $h\mapsto h\odot eS$ is more transparent: writing $\bar h$ for the coset $hS$ we have simply $h\mapsto \bar{h}$. Then \Cref{niceop} becomes
    \begin{equation}\label{niceopx}
        \overline{h_1} \star \overline{h_2} = \overline{h_1h_2}
    \end{equation}
    for all $h_1,h_2\in H$ and \Cref{niceinv} becomes
    \begin{equation}\label{niceinvx}
        (\overline{h})^{-1} = \overline{h^{-1}}
    \end{equation}
    for all $h\in H$. It must be stressed that these exceedingly clean relationships only necessarily hold using the coset representatives in $H$.
\end{rmk}

\begin{thrm}
\label{sbperm}
Let $L/K$ be a separable extension of fields as above, the additive operations $\star$ on $X$ under which $(G,X)$ is an almost classical skew bracoid are in bijective correspondence with the almost classical Hopf-Galois structures on $L/K$.
\end{thrm}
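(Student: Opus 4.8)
The plan is to leverage the bijection of \cite[Theorem 5.1]{mltruoid} between the Hopf-Galois structures on $L/K$ and the additive operations $\star$ making $(G,X)$ a skew bracoid, and to check that it restricts to a bijection between the almost classical operations and the almost classical Hopf-Galois structures. Since a bijection automatically restricts to a bijection between any subcollection of its domain and the corresponding subcollection of its codomain, it suffices to fix a normal complement $H$ of $S=\Stab_G(e_X)$ in $G$ and show that $(G,X,\star)$ is almost classical with respect to $H$ if and only if the Hopf-Galois structure attached to $\star$ has type $\lambda(H)^{opp}$. The theorem then follows, because the definitions of ``almost classical'' on the two sides both assert the existence of some such complement $H$, and the groups $G$ and $S$, hence the pool of candidate complements, are fixed throughout. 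For the bookkeeping, note that in the presentation $(G,X)$ the homomorphism $\lambda_\odot$ is precisely left translation of cosets $\lambda\colon G\to\Perm(X)$, and recall from the discussion above that the structure attached to $(G,X,\star)$ corresponds under \cite{gp} to the regular, $G$-stable subgroup $\rho_\star(X)\subseteq\Perm(X)$ of right translations of $(X,\star)$: writing $\lambda_\star$ for the left translation representation of $(X,\star)$, one has $\lambda_\odot(g)=\lambda_\star(g\odot e_X)\circ\gamma(g)$ by \Cref{biggamma}, and conjugation by this permutation sends $\rho_\star(x)$ to the right translation by $\acts{\gamma(g)}x$, matching the $\gamma$-action of $G$ on $X$ used to build $E[X,\star]^{(G,\cdot)}$.

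For the forward implication, suppose $(G,X,\star)$ is almost classical with respect to $H$. Writing $\overline h$ for $hS$, \Cref{niceopx} gives $\lambda(h)[\overline{h'}]=\overline{hh'}=\overline h\star\overline{h'}$ for all $h,h'\in H$; as $H$ is regular on $X$ the cosets $\overline{h'}$ run over all of $X$, so $\lambda(h)=\lambda_\star(\overline h)$ and hence $\lambda(H)=\lambda_\star(X)$. Taking centralisers in $\Perm(X)$ then yields $\lambda(H)^{opp}=C(\lambda_\star(X))=\rho_\star(X)$, which is exactly the type of the structure attached to $\star$, so that structure is almost classical with respect to $H$. Conversely, suppose the structure attached to $\star$ has type $\lambda(H)^{opp}$, i.e.\ $C(\lambda(H))=\rho_\star(X)$, for a normal complement $H$. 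Both $\lambda(H)$ (because $H$ is a complement to the point stabiliser $S$ and so acts regularly on $X$) and $\lambda_\star(X)$ are regular subgroups of $\Perm(X)$; using that $C(C(R))=R$ for any regular subgroup $R$ and that $C(\rho_\star(X))=\lambda_\star(X)$, applying $C$ to this relation gives $\lambda(H)=C(\rho_\star(X))=\lambda_\star(X)$. Evaluating at the identity coset $\overline e=e_X$ shows $\lambda(h)=\lambda_\star(\overline h)$ for every $h\in H$, whence $\overline{h_1h_2}=\lambda(h_1)[\overline{h_2}]=\overline{h_1}\star\overline{h_2}$; this is \Cref{niceopx}, so $h\mapsto\overline h$ is a bijective group homomorphism, and thus an isomorphism, from $(H,\cdot)$ to $(X,\star)$, i.e.\ $(G,X,\star)$ is almost classical with respect to $H$.

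I expect the main point requiring care to be the correct identification of the type of the skew bracoid's Hopf-Galois structure as $\rho_\star(X)$ rather than $\lambda_\star(X)$ — that is, keeping the left/right/$opp$ conventions of \cite{gp} and \cite{mltruoid} aligned — together with the standard double-centraliser property $C(C(R))=R$ for regular subgroups $R\subseteq\Perm(X)$. Once these are settled, the argument is just the centraliser computation above layered on top of \cite[Theorem 5.1]{mltruoid}, with the remaining content being the routine verification that that bijection restricts as claimed.
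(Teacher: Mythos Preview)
Your proposal is correct, and both arguments turn on the same identification $\rho_\star(X)=\lambda(H)^{opp}$. The paper reaches it by computing $\rho_\star(\overline{h_1})[\overline{h_2}]=\overline{h_2h_1^{-1}}$ explicitly from (\ref{niceopx})--(\ref{niceinvx}), checking commutation with $\lambda(H)$ by hand, and closing with a size argument; for the converse it constructs $\rho_H(h_1)[\overline{h_2}]=\overline{h_2h_1^{-1}}$ directly from the complement $H$ and then reads off $\star$. You instead observe that (\ref{niceopx}) says exactly $\lambda(H)=\lambda_\star(X)$, pass to centralisers in $\Perm(X)$ to obtain $\lambda(H)^{opp}=\rho_\star(X)$, and for the converse invoke the double-centraliser property $C(C(R))=R$ for regular $R$ to run the same step backwards. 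Your route is a little more conceptual and makes the converse symmetric with the forward direction, at the cost of quoting the double-centraliser fact; the paper's is more self-contained and also exhibits the operation $\star$ explicitly. Substantively they are the same proof.
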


\begin{proof}
Due to the theorem of Greither and Pareigis \cite{gp}, it will be enough to show that there is a bijective correspondence between operations $\star$ as in the statement and subgroups of $\Perm(X)$ of the form $\lambda(H)^{opp}$ for some $H$ giving $G=HS\cong H\rtimes S$.

Let $(G,X)$ be an almost classical skew bracoid with additive operation $\star$, and $H$ be a normal complement to $S$ in $G$ for which $(H,X)$ is essentially trivial. As in \cite[Theorem 5.1]{mltruoid}, we consider the right regular representation $\rho_\star$ of $(X,\star)$ into $\Perm(X)$. Being sure to write our cosets using their representatives in $H$, for $h_1,h_2 \in H$ we then have 
\begin{align*}
\rho_\star(\overline{h_1})[\overline{h_2}]&= \overline{h_2} \star (\overline{ h_1})^{-1}  &&    \\
						&= \overline{h_2} \star \overline{h_1^{-1}}	  &&\text{by (\ref{niceinvx})}    \\
						&= \overline{h_2h_1^{-1}} &&\text{by (\ref{niceopx})}.
\end{align*}
We claim that $\rho_\star(X)$ coincides with $\lambda(H)^{opp}$ for this $H$. By the definition of $\lambda(H)^{opp}$ and the fact that $\lambda(h_1)\rho_\star(h_2)[h_3S]=(h_1h_3h_2^{-1})S=\rho_\star(h_2)\lambda(h_1)[h_3S]$ for all $h_1,h_2,h_3\in H$, we have $\rho_\star(X)\subseteq \lambda(H)^{opp}$ and a size argument gives equality.

Conversely, let $H$ be a normal complement to $S$ in $G$. Then $\lambda(H)$, equivalently $\lambda(H)^{opp}$, is regular on $X$ and $G$-stable since $H$ forms a set of coset representatives for $X$ and is normal in $G$. Define $\rho_H:H\to \Perm(X)$ by $\rho_H(h_1)[\overline{h_2}]=\overline{h_2h_1^{-1}}$ for all $h_1,h_2\in H$. Since $H$ forms a set of coset representatives for $X$, defining a permutation using the representatives in $H$ is enough to fully define it and since these representatives are unique there can be no ambiguity in its effect. Noting that $\rho_H(H)$ coincides with $\rho_\star(X)$ above, it therefore coincides with $\lambda(H)^{opp}$. 

The fact that the images of $\rho_\star$ and $\rho_H$ agree is enough to give the correspondence, but for completeness we define the operation $\star$ on $X$ coming from $\rho_H$. Recall from \cite[Theorem 5.1]{mltruoid} that for a subgroup $N\subseteq \Perm(X)$, we use the bijection $a:N\to X$ given by $\eta\mapsto \eta^{-1}[\bar e]$ to transfer the operation in $N$ onto $X$. In our case specifically, this is then $a(\rho_H(h))=\rho_H(h)^{-1}[\bar e]=\bar h$. For $h_1,h_2\in H$, we define
\begin{alignat*}{2}
\overline{h_1} \star \overline{h_2} &&{}:={}& a(a^{-1}(\overline{h_1})a^{-1}(\overline{ h_2}))  \\
					&&{}={}& a(\rho_H(h_1)\rho_H(h_2))											\\
					&&{}={}& \rho_H(h_2)^{-1}\rho_H(h_1)^{-1}[\bar e]							\\
					&&{}={}& \rho_H(h_2)^{-1}[\overline{h_1}]									\\
					&&{}={}& \overline{h_1h_2},
\end{alignat*}
so that $\star$ in $X$ is really $\cdot$ in $H$ as required.
\end{proof}

As discussed in \Cref{intro}, one of the central questions of Hopf-Galois theory is that of the surjectivity of the Hopf-Galois correspondence. The connection between skew braces and Hopf-Galois structures on Galois extensions has been incredibly useful to the study of this question in recent years, especially following the realignment of the correspondence due to Stefanello and Trappeniers \cite[Theorem 3.1]{sttwistpub}. This means that left ideals of the skew brace are in correspondence with $K$-Hopf sub-algebras of the associated Hopf-Galois structure and therefore the intermediate fields that occur in the image of the Hopf-Galois correspondence \cite[Theorem 3.9]{sttwistpub}. Using this, Stefanello and Trappeniers give many results including providing a characterisation of the extensions for which the Hopf-Galois correspondence is surjective for every structure they admit \cite[Theorem 4.24]{sttwistpub}. It is our hope that the skew bracoid may be a similarly powerful tool in the study of the Hopf-Galois correspondence in the separable case, with \cite[Theorem 5.9]{mltruoid} providing the analogous result on the alignment of substructures. As a prototype, we can now see a result of Greither and Pareigis as a consequence of \Cref{sbperm} and \Cref{hgcsb}. 

\begin{cor}\textbf{\cite[Theorem 5.2]{gp}}
Let $L/K$ be almost classically Galois. The Hopf-Galois correspondence is surjective for all almost classical Hopf-Galois structures on $L/K$.
\end{cor}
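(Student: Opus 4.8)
The plan is to combine \Cref{sbperm} with \Cref{hgcsb} via the alignment of substructures from \cite[Theorem 5.9]{mltruoid}. Let $L/K$ be almost classically Galois with Galois closure $E$, and set $G=\Gal(E/K)$, $S=\Gal(E/L)$; since the extension is almost classical, $S$ has a normal complement $H$ in $G$. Fix an almost classical Hopf-Galois structure on $L/K$; by \Cref{sbperm} this corresponds to an additive operation $\star$ on $X=G/S$ making $(G,X)$ an almost classical skew bracoid (with respect to some normal complement $H$). The strategy is to show the fix map is surjective by exhibiting, for every intermediate field $L'$ of $L/K$, a $K$-Hopf subalgebra of $E[X,\star]^{(G,\cdot)}$ whose fixed field is exactly $L'$.

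First I would recall from \cite[Theorem 5.9]{mltruoid} that the $K$-Hopf subalgebras of $E[X,\star]^{(G,\cdot)}$ — equivalently, the intermediate fields in the image of the Hopf-Galois correspondence — are in bijection with the left ideals of the skew bracoid $(G,X)$, and that under this bijection a left ideal $X'\subseteq X$ corresponds to the intermediate field fixed by the sub-object it determines. On the other side, the classical Galois correspondence identifies intermediate fields of $L/K$ with the subgroups $G'$ of $G$ satisfying $S\subseteq G'$, via $G'\mapsto E^{G'}\cap L$. So it suffices to produce, for each subgroup $G'$ of $G$ containing $S$, a left ideal of $(G,X)$ whose associated intermediate field is $E^{G'}\cap L$. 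The natural candidate is $G'\odot eS = G'/S\subseteq X$, and \Cref{hgcsb} is precisely the statement that this is a left ideal of the almost classical skew bracoid $(G,X)$.

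The remaining step — and the one I expect to be the only real content beyond citing the two earlier results — is to check that the left ideal $G'/S$ produced by \Cref{hgcsb} corresponds, under the substructure alignment of \cite[Theorem 5.9]{mltruoid}, to the intermediate field $E^{G'}\cap L$, so that as $G'$ ranges over the subgroups containing $S$ we recover every intermediate field of $L/K$ and conclude surjectivity. Here one traces through how a left ideal $X'\subseteq X$ determines a $K$-Hopf subalgebra and hence a fixed field: the fixed field attached to $X'$ is $L^{\,?}$ where the sub-Hopf-algebra acts through the quotient data of $X'$, and for $X'=G'/S$ this fixed field is the one cut out by the cosets in $G'$, namely $E^{G'}\cap L$. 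Since this assignment $G'\mapsto E^{G'}\cap L$ is exactly the classical Galois correspondence for $L/K$, which is a bijection onto the intermediate fields, every intermediate field lies in the image of the Hopf-Galois correspondence, and we are done.

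\begin{proof}
Let $E$ be the Galois closure of $L/K$, and write $G=\Gal(E/K)$ and $S=\Gal(E/L)$; as $L/K$ is almost classically Galois, $S$ has a normal complement $H$ in $G$. Fix an almost classical Hopf-Galois structure on $L/K$. By \Cref{sbperm} it arises from an additive operation $\star$ on $X=G/S$ making $(G,X)$ an almost classical skew bracoid, and the associated Hopf-Galois structure is $E[X,\star]^{(G,\cdot)}$. By \cite[Theorem 5.9]{mltruoid}, the intermediate fields of $L/K$ in the image of the Hopf-Galois correspondence are precisely those arising from left ideals of $(G,X)$.

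Let $L'$ be any intermediate field of $L/K$ and put $G'=\Gal(E/L')$, a subgroup of $G$ containing $S$, with $E^{G'}\cap L=L'$. By \Cref{hgcsb}, $G'\odot e_X=G'/S$ is a left ideal of $(G,X)$; tracing through the alignment of \cite[Theorem 5.9]{mltruoid}, the intermediate field attached to the left ideal $G'/S$ is the subfield of $L$ fixed by the cosets lying in $G'$, namely $E^{G'}\cap L=L'$. Hence $L'$ lies in the image of the Hopf-Galois correspondence for this structure. Since $L'$ was arbitrary, the Hopf-Galois correspondence is surjective, and as the chosen almost classical Hopf-Galois structure was arbitrary, the result follows.
\end{proof}
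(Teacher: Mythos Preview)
Your proposal is correct and follows essentially the same approach as the paper: translate the almost classical Hopf-Galois structure to an almost classical skew bracoid via \Cref{sbperm}, invoke classical Galois theory to parametrise intermediate fields by subgroups $G'\supseteq S$, apply \cite[Theorem 5.9]{mltruoid} to reduce surjectivity to the left-ideal condition, and then use \Cref{hgcsb}. The paper's proof is simply a terser version of yours, citing \cite[Theorem 5.9]{mltruoid} directly for the statement that an intermediate field lies in the image of the Hopf-Galois correspondence if and only if $G'\odot \bar e$ is a left ideal, rather than tracing through the fixed-field identification as you do.
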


\begin{proof}
Let $(G,X)$ be an almost classical skew bracoid, corresponding to an almost classical Hopf-Galois structure on $L/K$.

From classical Galois theory we know that the intermediate fields of $L/K$ are in bijective correspondence with subgroups $G'$ of $G$ that contain $S$. By \cite[Theorem 5.9]{mltruoid}, these intermediate fields then occur in the image of the Hopf-Galois correspondence if and only if $G'\odot \bar e$ is a left ideal of $(G,X)$. \Cref{hgcsb} shows precisely that this holds for all such $G'$. 
\end{proof}

We note that our characterisation of almost classical in skew braciods is slightly broader than that implied by the concept in Hopf-Galois theory, firstly we allow the skew bracoid to be infinite, which is simply not covered by Greither and Pareigis theory; and secondly we do not assume that the skew bracoid is reduced or equivalently that $E$ is the Galois closure in particular. We take this relaxed definition because many of the results concerning almost classical extensions hold in this more general setting, including the induction construction of \Cref{inducesec}, which we may now see as a generalisation of the existing induced machinery of \cite{crvind} to a tower of merely separable extensions.

\begin{cor}
    Let $E/K$ be a Galois extension of fields with Galois group $G$. Suppose $G=HS\cong H\rtimes S$ and let $S'$ be a subgroup of $S$. Taking $L:=E^S$ and $L':=E^{S'}$ in the classical Galois sense, we have a tower of fields with $L/K$ and $L'/L$ not necessarily Galois, as in \Cref{figind}. Suppose we have a Hopf-Galois structure of type $N$ on $L/K$ and a Hopf-Galois structure of type $M$ on $L'/L$. Then there is a Hopf-Galois structure of type $N\times M$ on $L'/K$.
\end{cor}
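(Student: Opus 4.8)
The plan is to translate the field-theoretic statement entirely into the language of skew bracoids, apply the induction construction of \Cref{induce}, and then translate back using the dictionary set up in \Cref{sechg}. Concretely, the Galois extension $E/K$ with group $G$, together with the subgroup $S$, yields via \cite[Theorem 5.1]{mltruoid} and \cite[Example 4.9]{mltruoid} a correspondence between Hopf-Galois structures on $L = E^S$ and additive operations making $(G, G/S)$ a skew bracoid; note $\Stab_G(eS) = S = \Gal(E/L)$. Since $G = H \rtimes S$, any such skew bracoid $(G, G/S)$ is almost a brace with respect to $H$ by \Cref{skewonalmostext} (we do not need the reduced hypothesis here, as the relaxed definition of almost a brace applies). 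So the type-$N$ Hopf-Galois structure on $L/K$ gives us a skew bracoid $(G, N, \odot_N)$ that is almost a brace with respect to $H$, with additive group $N$ and stabiliser $S$.

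Next I would handle the structure on $L'/L$. Since $L' = E^{S'}$ with $S' \leq S$, and $E/L$ is Galois with group $S$, the same correspondence \cite[Theorem 5.1]{mltruoid} applied to the tower $E/L$ (with $S$ playing the role of the big Galois group and $S'$ the stabiliser) shows that the type-$M$ Hopf-Galois structure on $L'/L$ corresponds to a skew bracoid $(S, M, \odot_M)$ with additive group $M$ and $\Stab_S(e_M) = S'$. Now I have exactly the input required for \Cref{induce}: a skew bracoid $(G, N, \odot_N)$ almost a brace with respect to $H$, with stabiliser $S$, together with a skew bracoid $(S, M, \odot_M)$. Applying \Cref{induce} produces a skew bracoid $(G, N \times M, \odot)$, and as noted immediately after that theorem, $\Stab_G(e_N, e_M) = \Stab_S(e_M) = S'$.

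Finally I would translate back. The skew bracoid $(G, N \times M, \odot)$ has multiplicative group $G = \Gal(E/K)$ and stabiliser $S'$, so it is of the form $(G, X')$ with $X' = G/S'$ in the sense of \cite[Example 4.9]{mltruoid} (up to the canonical isomorphism identifying $X'$ with $N \times M$ as $G$-sets, which exists because both are transitive $G$-sets with point-stabiliser $S'$). Since $L' = E^{S'}$ and $S' = \Gal(E/L')$, the correspondence \cite[Theorem 5.1]{mltruoid} sends this skew bracoid to a Hopf-Galois structure on $L'/K$, and its type is the isomorphism class of the additive group $N \times M$. This gives the desired structure of type $N \times M$ on $L'/K$.

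The main obstacle, and the point that needs the most care, is the bookkeeping of stabilisers and $G$-set identifications: one must check that the additive group $N \times M$ of the induced skew bracoid, with its $G$-action $\odot$, really does match the abstract coset space $G/S'$ as a $G$-set so that \cite[Theorem 5.1]{mltruoid} applies to read off a Hopf-Galois structure on $L'/K$ rather than on some other subfield. This is precisely the content of the remark following \Cref{induce} that $\Stab_G(e_N, e_M) = \Stab_S(e_M) = S'$, combined with transitivity of $\odot$ established in the proof of \Cref{induce}; a transitive $G$-action with stabiliser $S'$ is unique up to $G$-equivariant isomorphism, so no ambiguity arises. Everything else — that the inputs satisfy the hypotheses of \Cref{induce}, and that the output has the claimed type — is immediate from the cited results.
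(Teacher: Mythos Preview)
Your proposal is correct and follows essentially the same approach as the paper: translate the two Hopf--Galois structures into skew bracoids via \cite[Theorem 5.1]{mltruoid}, note that $(G,N)$ is almost a brace by \Cref{skewonalmostext}, apply \Cref{induce}, and translate the induced skew bracoid back. The paper's proof is a terse two-sentence version of exactly this; your additional discussion of the stabiliser bookkeeping (that $\Stab_G(e_N,e_M)=S'$ so the resulting $G$-set is $G/S'$) makes explicit what the paper leaves implicit in the phrase ``translating back''.
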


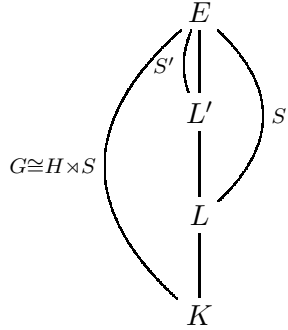
\begin{figure}[h]
    \centering
    \[\xymatrixcolsep{3pc} 
\xymatrixrowsep{2pc}
\xymatrix{
\ar@/_3pc/@{-}[ddd]_{G\cong H\rtimes S} E \ar@/_0.5pc/@{-}[d]_{S'} \ar@{-}[d] \ar@/^2pc/@{-}[dd]^{S} \\
L' \ar@{-}[d]\\
L \ar@{-}[d] \\
K }\]
    \caption{Field diagram for the induced set-up.}
    \label{figind}
\end{figure}

\begin{proof}
    In view of \cite[Theorem 5.1]{mltruoid} and \Cref{skewonalmostext}, this is the set-up of \Cref{induce}. Taking the resulting induced skew bracoid and translating back, we have a Hopf Galois structure of type $N\times M$ on $L'/K$.
\end{proof}

\subsection{Byott's Translation}\hfill \label{sectrans}

We noted in \Cref{intro} that under the current procedures, the route from Hopf-Galois structure to skew bracoid does not necessarily yield the same skew bracoid as the one obtained by first passing to the holomorph, even up to isomorphism. We conclude this section with a proposed realignment of the connection between the permutation and holomorph setting to account for this.

We begin by briefly reviewing the situation as it stands; we use the same notation as before and for simplicity we take $E$ to be the Galois closure. There are established correspondences between four objects, namely:
\begin{enumerate}[(i)]
    \item Hopf-Galois structures on $L/K$; \label{hgs}
    \item regular $G$-stable subgroups $N$ of $\Perm(X)$; \label{perm}
    \item transitive subgroups of $\Hol(N)\cong \Hol(X)$; \label{hol}
    \item and (reduced) skew bracoids $(G,X)$.\label{sboid}
\end{enumerate}
The relationship between (\ref{hgs}) and (\ref{perm}) is demonstrated by Greither and Pareigis in their foundational paper \cite{gp}. Following an observation of Childs in \cite{ch89}, Byott shows that there is a Hopf-Galois structure of type $N$ on $L/K$ if and only if there is a transitive embedding of $G$ into $\Hol(N)$ with the image of $S$ stabilising the identity \cite{btrans}. A review of this result can be found in \cite[Chapter 2 \S 3]{chetal} or \cite[\S 7]{chbook}, and the crucial chain of maps is summarised in \Cref{byo}. Building on the work of \cite{sttwistpub}, we see the correspondence between (\ref{hgs}) and (\ref{sboid}) in \cite{mltruoid}, showing the relationship between (\ref{sboid}) and (\ref{perm}) as part the proof. Separately, \cite{mltruoid} also shows the correspondence between (\ref{sboid}) and (\ref{hol}). 

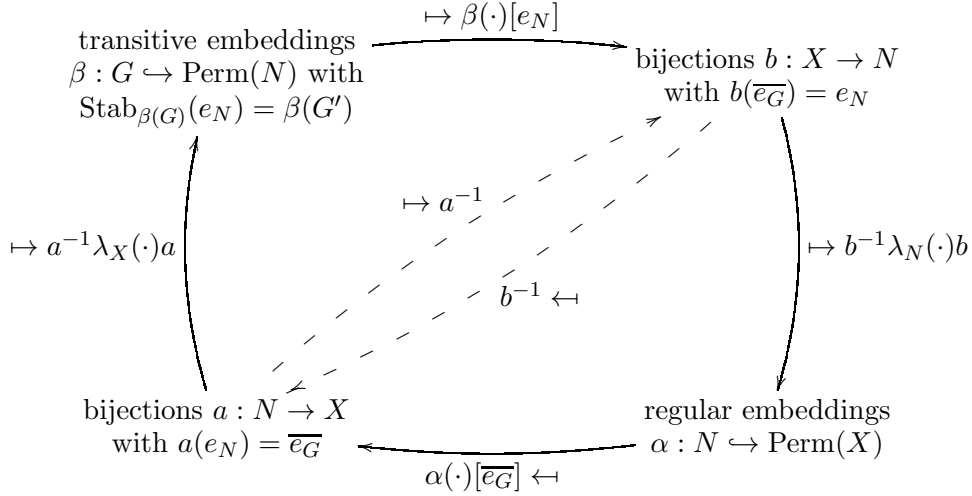
\begin{figure}[h]
\centering
\[\xymatrixcolsep{8pc} 
\xymatrixrowsep{8pc}
\renewcommand{\labelstyle}{\textstyle}
\xymatrix{
    \txt{transitive embeddings  \\$\beta:G\into \Perm(N)$ with \\$\Stab_{\beta(G)}(e_N)=\beta(G')$} 
    \ar@/^1pc/@{->}[r]^{\mapsto \beta(\cdot)[e_N]} 
    & \txt{bijections $b:X\to N$\\with $b(\overline{e_G})=e_N$} 
    \ar@/^1pc/@{-->}[dl]^{b^{-1}\mapsfrom}\ar@/^1pc/@{->}[d]^{\mapsto b^{-1}\lambda_N(\cdot)b}\\
    \ar@/^1pc/@{->}[u]^{\mapsto a^{-1}\lambda_X(\cdot)a} \ar@/^1pc/@{-->}[ur]^{\mapsto a^{-1}}
    \txt{bijections $a:N\to X$\\with $a(e_N)=\overline{e_G}$} 
    & \ar@/^1pc/@{->}[l]^{\alpha(\cdot)[\overline{e_G}]\mapsfrom} 
    \txt{regular embeddings \\$\alpha:N\into \Perm(X)$}
}\]
\caption{The existing maps, before specialiasing to the $G$-stable, holomorph case. \cite{btrans}}
\label{byo}
\end{figure}

To illustrate the issue consider the following example.

\begin{eg}
Take some Galois extension of fields, $E/K$, with non-abelian Galois group $G$. It is well known that such an extension admits a canonical non-classical structure corresponding to $\lambda(G)\subseteq \Perm(G)$. Stefanello and Trappeniers show that under their procedures this structure corresponds to the \textit{almost trivial skew brace} on $G$, $(G,\cdot^{opp},\cdot)$ \cite[Example 3.5]{sttwistpub}. 

Following the maps outlined in \Cref{byo}, noting that in this setting $N$ and $X$ both coincide with $G$, we find that $\alpha=\lambda :G\into \Perm(G)$ leads to $a=\id : G\to G$ which in turn gives $\beta=\lambda:G\into \Perm(G) $ and we recover $\lambda(G)$ in $\Hol(G)$. To find the corresponding skew brace we may transfer binary operation in $\lambda(G)\subseteq \Hol(G)$ onto $G$ itself using the bijection defined by evaluation at the identity. It is routine to check that this results in the trivial skew brace on $G$, $(G,\cdot,\cdot)$, rather than the almost trivial as before.
\end{eg} 

We see in the above example that the skew brace obtained via one route is the \textit{opposite} (see \cite{kochideal}) of that obtained via the other. Indeed, it is the central idea of \cite{sttwistpub} to associate a Hopf-Galois structure with the opposite skew brace to that in the original connection given in \cite[Appendix A]{svb}. It must be emphasised that the discrepancy we now identify is present in but irrelevant to \cite{sttwistpub}, since their correspondence, and the subsequent extension to the separable case in \cite{mltruoid}, intentionally bypasses both the permutation and holomorph setting. However, since we have investigated qualitative properties of skew bracoids and Hopf-Galois structures via the holomorph, we must reconcile this. To do so we propose a relabelling between (\ref{perm}) and (\ref{hol}). 

\subsubsection{The Relabelling}\hfill

The translation works using a series of maps between sets of embeddings and bijections, as shown in \Cref{byo}. Note that these results only hold as written using $E$ as the Galois closure. In this notation, we wish to align $\beta(G)$ with $\alpha(N)^{opp}$.

\begin{prop}
Let $\beta:G\into \Perm(N)$ be a suitable embedding and $\iota:N\to N$ denote the inversion map. Then, 
\begin{enumerate}[(i)]
    \item $\hat{\beta}$ given by $g\mapsto\iota \beta(g)\iota$ for all $g\in G$ is also a suitable embedding; \label{betahat}
    \item the corresponding $\hat{\alpha}$ has $\hat{\alpha}(N)=\alpha(N)^{opp}$.\label{alphahat}
\end{enumerate}
\end{prop}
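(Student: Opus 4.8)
The plan is to work directly with the chain of maps in \Cref{byo}, tracking how the conjugation-by-$\iota$ twist on $\beta$ propagates through each arrow to land on $\alpha$. For part (\ref{betahat}), I would first observe that $\iota$ is a bijection $N\to N$ (it need not be a group automorphism, but it is certainly a permutation), so $\hat\beta(g)=\iota\beta(g)\iota$ is a well-defined homomorphism $G\to\Perm(N)$ since $\iota^2=\id$; injectivity and transitivity of $\hat\beta(G)$ on $N$ are immediate because conjugation by a fixed permutation is an automorphism of $\Perm(N)$ preserving transitivity. The one genuine point to check is the stabiliser condition: $\hat\beta(g)$ fixes $e_N$ iff $\beta(g)$ fixes $\iota(e_N)=e_N$, so $\Stab_{\hat\beta(G)}(e_N)=\Stab_{\beta(G)}(e_N)=\beta(G')$, and applying $\iota$-conjugation shows this equals $\hat\beta(G')$. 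Hence $\hat\beta$ is again a suitable embedding.

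For part (\ref{alphahat}), I would follow $\hat\beta$ through the diagram. The associated bijection $b:X\to N$ is $\bar g\mapsto\hat\beta(g)[e_N]=\iota\beta(g)\iota[e_N]=\iota\bigl(\beta(g)[e_N]\bigr)$, i.e. $b=\iota\circ(\text{the }b\text{ attached to }\beta)$; equivalently the bijection $\hat a:N\to X$ is $(\text{old }a)\circ\iota^{-1}=(\text{old }a)\circ\iota$. Then $\hat\alpha(N)=\hat a^{-1}\lambda_N(N)\hat a$ — but one must be careful here, since the correspondence between (\ref{perm}) and (\ref{hol}) in \cite{mltruoid} and the twist of \cite{sttwistpub} mean the relevant regular representation is really $\rho_N$ (the right regular representation) rather than $\lambda_N$; conjugating the right regular representation of $N$ by $\iota$ yields the left regular representation, equivalently $\rho_N^{opp}$. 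Transporting this across $\hat a$ and comparing with $\alpha(N)=a^{-1}\lambda_N(N)a$ (or $\rho_N$, in the corrected convention) should give exactly $\hat\alpha(N)=\alpha(N)^{opp}$. Concretely, the identity to verify is $\hat\alpha(\eta)=\hat a^{-1}\rho_N(\eta)\hat a = a^{-1}\iota\,\rho_N(\eta)\,\iota\, a = a^{-1}\rho_N(\eta^{-1})\ldots a$ type manipulation, where the inversion absorbs into the group operation and produces the opposite group inside $\Perm(X)$.

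The main obstacle I anticipate is bookkeeping rather than mathematics: the excerpt has just flagged that the existing literature has an $opp$-discrepancy between the permutation and holomorph pictures, so I would need to be scrupulous about which regular representation ($\lambda_N$ vs $\rho_N$) and which action (left vs right translation of cosets in $X$) is in force at each node of \Cref{byo}, and to state explicitly that the computation is carried out for $E$ the Galois closure so that all the maps in that diagram are literally the bijections drawn there. The key algebraic fact doing all the work is the elementary observation that conjugating a regular representation of a group by the inversion map interchanges the left and right regular representations, hence produces the opposite group; once that is isolated, the rest is substitution into the formulas $a^{-1}\lambda_X(\cdot)a$, $\alpha(\cdot)[\overline{e_G}]\mapsfrom$, etc., and checking the two routes around the diagram agree.
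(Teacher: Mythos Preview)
Your approach to part (\ref{betahat}) is correct and essentially identical to the paper's: conjugation by the fixed permutation $\iota$ preserves the homomorphism, injectivity, and transitivity properties, and since $\iota(e_N)=e_N$ the stabiliser condition transfers directly.

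For part (\ref{alphahat}) you have the right central identity --- conjugating $\lambda_N$ by $\iota$ yields $\rho_N$ --- and this is exactly what the paper uses. However, your execution muddles the setup in a way that would derail the write-up. This proposition is computed entirely within the \emph{original} chain of maps (\Cref{byo}), where the arrow from $b$ to $\alpha$ is $b^{-1}\lambda_N(\cdot)b$. Your hedge that ``the relevant regular representation is really $\rho_N$'' pre-emptively imports the revision that this proposition is meant to \emph{motivate}; you should commit to $\lambda_N$ here and let $\rho_N$ emerge from the $\iota$-conjugation. Concretely, the paper computes $\hat b = \iota\circ b$, hence $\hat\alpha(\eta)=\hat b^{-1}\lambda_N(\eta)\hat b = b^{-1}\iota\lambda_N(\eta)\iota b = b^{-1}\rho_N(\eta)b$, while $\alpha(\eta)=b^{-1}\lambda_N(\eta)b$. (Note also your formula $\hat a^{-1}\lambda_N(\cdot)\hat a$ has the conjugation the wrong way round: $\hat a^{-1}$ maps $X\to N$, so that expression would land in $\Perm(N)$, not $\Perm(X)$.)

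Your sketch then tails off with an ellipsis precisely where the argument needs closing. The paper finishes by showing directly that $\alpha(\mu)\hat\alpha(\eta)=\hat\alpha(\eta)\alpha(\mu)$ for all $\eta,\mu$ (immediate since $\lambda_N$ and $\rho_N$ commute, and the $b$'s cancel), and then invokes a size argument to conclude $\hat\alpha(N)=\alpha(N)^{opp}$. You should make this step explicit, and be clear that $\alpha(N)^{opp}$ here denotes the centraliser of the regular subgroup $\alpha(N)$ in $\Perm(X)$ in the Greither--Pareigis sense, not an abstract opposite group --- your phrase ``produces the opposite group inside $\Perm(X)$'' risks conflating the two.
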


\begin{proof}
We begin with (\ref{betahat}). We see that $\hat{\beta}$ is a homomorphism from the fact that $\beta$ is a homomorphism: $\iota\beta(gh)\iota=\iota\beta(g)\iota\iota\beta(h)\iota$ for all $g,h\in G$. Now $\hat{\beta}$ is injective since if $\iota\beta(g)\iota[\eta]=\eta$ for all $\eta\in N$, then $\beta(g)[\eta^{-1}]=\eta^{-1}$ for all $\eta\in N$ which implies $g=e_G$ by the injectivity of $\beta$. The transitivity of $\beta(G)$ gives $\iota\beta(G)\iota[e_N]=\iota[N]=N$ so $\hat{\beta}(G)$ is also transitive. Finally, for $g\in G$ we have $\hat{\beta}(g)[e_N]=e_N$ if and only if $\iota\beta(g)[e_N]=e_N$ or equivalently $\beta(g)[e_N]=e_N$, hence $\hat{\beta}(g)\in\Stab_{\hat{\beta}(G)}(e_N)$ if and only if $\beta(g)\in\Stab_{\beta(G)}(e_N)$, that is if and only if $g\in G'$.

For (\ref{alphahat}), first observe that the bijection $\hat{b}:X\to N$ coming from $\hat{\beta}$ relates to the analogous $b$ from $\beta$ via $\hat{b}(\bar{g})=\iota\beta(g)\iota[e_N]=\iota\beta(g)[e_N]=\iota b(\bar{g})$ for all $\bar{g}\in X$. We then have $\hat\alpha(\eta)=b^{-1}\iota\lambda_N(\eta)\iota b$ for all $\eta\in N$. Further, for all $\eta,\mu\in N$ we have
\begin{equation*}
    \iota\lambda_N(\eta)\iota[\mu] = \iota(\eta\mu^{-1})=\mu\eta^{-1}=\rho_N(\eta)[\mu].
\end{equation*}
so that  
\begin{equation*}
\hat{\alpha}(\eta)= b^{-1}\iota\lambda_N(\eta)\iota b = b^{-1} \rho_N(\eta) b.
\end{equation*}

Let $\eta,\mu\in N$, we then have
\begin{align*}
\alpha(\mu)\hat{\alpha}(\eta) &= b^{-1}\lambda_N(\mu)b b^{-1}\rho_N(\eta)b           \\
                                &= b^{-1}\lambda_N(\mu)\rho_N(\eta)b                \\
                                &= b^{-1}\rho_N(\eta)\lambda_N(\mu)b                \\
                                &= b^{-1}\rho_N(\eta)b b^{-1}\lambda_N(\mu)b         \\
                                &= \hat{\alpha}(\eta)\alpha(\mu).   
\end{align*}
Hence $\hat{\alpha}(\eta)$ and $\alpha(\mu)$ commute for all $\eta, \mu\in N$, and a size argument leads to ${\hat{\alpha}(N)=\alpha(N)^{opp}}$.
\end{proof}

The idea is then to adjust the maps so that $\beta$ corresponds with $\hat{\alpha}$ (and $\alpha$ corresponds with $\hat{\beta}$). The choices are somewhat arbitrary, we make these adjustments between $\alpha$ and its neighbours and give some justification for this after we show that it is indeed possible. For completeness and clarity we reproduce the remaining bijections from \cite{btrans}, note that we now dispense with the $\hat{\alpha}$ notation with the intention that what follows is entirely self contained.

\begin{prop}\textbf{\cite{btrans}}\label{byoskewprop}
There is a bijection between the following sets:
\begin{gather*}
\mathcal{A}=\{\text{regular embeddings } \alpha:N\into \Perm(X)\}, \\
\mathcal{B}=\{\text{transitive embeddings } \beta:G\into \Perm(N) \text{ such that } \beta(G')=\Stab_{\beta(G)}(e_N) \}. 
\end{gather*}
\end{prop}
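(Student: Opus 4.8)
The plan is to reconstruct the classical Byott translation in the $G$-stable/holomorph setting, but routed through the \emph{opposite} regular representation so as to match the relabelling just established. Concretely, I would build the bijection as a composite of three elementary bijections, following the shape of \Cref{byo}: start from a regular embedding $\alpha:N\into\Perm(X)$; extract the bijection $a:N\to X$ defined by $a(\eta)=\alpha(\eta)^{-1}[\overline{e_G}]$ (using the opposite/inverse convention so that $a$ intertwines $\alpha$ with the \emph{left} regular action appropriately); then push the left-translation action $\lambda_X$ of $G$ on $X$ across $a$ to obtain an action of $G$ on $N$, i.e.\ set $\beta(g)=a^{-1}\lambda_X(g)a$. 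Going backwards, from $\beta$ one recovers $b=a^{-1}:X\to N$ via $\overline{g}\mapsto\beta(g)[e_N]$ (this is exactly the bijection appearing in the top row of \Cref{byo}), and then $\alpha(\eta)=b^{-1}\lambda_N(\eta)b$ — or, with the opposite convention in force, $\alpha(\eta)=b^{-1}\rho_N(\eta)b$. The key checks are: (1) $\beta$ is an injective homomorphism (immediate, since it is conjugation of the faithful homomorphism $\lambda_X$ by a fixed bijection); (2) $\beta(G)$ is transitive on $N$ (because $\lambda_X(G)$ is transitive on $X$ and $a$ is a bijection); (3) the stabiliser condition $\beta(G')=\Stab_{\beta(G)}(e_N)$ holds, which reduces to the statement that $g\in G'=S$ iff $\lambda_X(g)$ fixes $\overline{e_G}$, i.e.\ iff $gS=S$ — true by definition of $S$ and the fact that $a(e_N)=\overline{e_G}$; and (4) the two constructions are mutually inverse, which is a formal diagram-chase through the identifications $b=a^{-1}$, $\beta=a^{-1}\lambda_X(\cdot)a$, $\alpha=b^{-1}\rho_N(\cdot)b$.

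In more detail, I would first fix the conventions carefully, since the whole subtlety of this subsection is a convention clash: given $\alpha\in\mathcal A$, define $a:N\to X$ by $a(\eta):=\alpha(\eta)^{-1}[\overline{e_G}]$, noting $a(e_N)=\overline{e_G}$ and that $a$ is a bijection because $\alpha(N)$ acts regularly (hence freely and transitively) on $X$; this is exactly the map used in \cite[Theorem 5.1]{mltruoid} to transfer operations. Then define $\beta_\alpha:G\to\Perm(N)$ by $\beta_\alpha(g):=a^{-1}\circ\lambda_X(g)\circ a$, where $\lambda_X$ is left translation of cosets. Conversely, given $\beta\in\mathcal B$, define $b_\beta:X\to N$ by $b_\beta(\overline g):=\beta(g)[e_N]$ — well-defined and bijective because $\beta(G)$ is transitive and $\beta(G')=\Stab_{\beta(G)}(e_N)$ ensures $\beta(g)[e_N]$ depends only on the coset $gG'$ — and set $\alpha_\beta(\eta):=b_\beta^{-1}\circ\rho_N(\eta)\circ b_\beta$ where $\rho_N$ is the right regular representation. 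One then verifies $\alpha_\beta\in\mathcal A$ (regularity is inherited from regularity of $\rho_N(N)$ on $N$ under conjugation by the bijection $b_\beta$) and checks $b_\beta=a_{\alpha}^{-1}$ whenever $\beta=\beta_\alpha$, which makes $\alpha\mapsto\beta_\alpha$ and $\beta\mapsto\alpha_\beta$ two-sided inverses.

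I expect the main obstacle — or at least the only place where care is genuinely required — to be bookkeeping the $\rho$ versus $\lambda$ / inverse-versus-direct conventions so that the composite is genuinely a bijection onto $\mathcal B$ with the \emph{stated} stabiliser condition, rather than onto some variant set; this is precisely the point the preceding proposition (on $\hat\beta$, $\hat\alpha$ and $\alpha(N)^{opp}$) was designed to license, so the cleanest write-up will simply cite that computation, namely $b^{-1}\iota\lambda_N(\eta)\iota b=b^{-1}\rho_N(\eta)b$, to justify using $\rho_N$ in the definition of $\alpha_\beta$. Everything else — homomorphism property, injectivity, transitivity, the stabiliser identity, and the mutual-inverse diagram chase — is routine and follows the template of \Cref{byo} and \cite[Theorem 5.1]{mltruoid} verbatim, so in the actual proof I would state these four verifications and dispatch each in a line or two, flagging explicitly that all maps are natural in the data and that the only non-formal input is the regularity of $\alpha(N)$, used to see that $a$ is a bijection.
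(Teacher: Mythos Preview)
Your proposal is correct and follows essentially the same approach as the paper: the paper also defines $a(\eta)=\alpha\iota(\eta)[\overline{e_G}]$ (which equals your $\alpha(\eta)^{-1}[\overline{e_G}]$ since $\alpha$ is a homomorphism), sets $\beta_a(g)=a^{-1}\lambda_X(g)a$, and in the reverse direction takes $b(\barg)=\beta(g)[e_N]$ and $\alpha_b(\eta)=b^{-1}\rho_N(\eta)b$, then verifies the same four points and the mutual-inverse check via $b=a^{-1}$. The only cosmetic difference is that the paper presents the construction starting from $\beta$ rather than $\alpha$, and does not appeal back to the $\hat\alpha$/$\hat\beta$ computation but simply works with $\rho_N$ directly as part of a self-contained rewrite.
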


\begin{proof}
Suppose we have a transitive embedding $\beta\in \mathcal{B}$. We may take $b:X\to N$ given by $b(\barg)=\beta(g)[e_N]$, this is well defined due to the condition on the stabiliser of $e_N$. Then $b$ is a bijection as $\beta(G)$ is transitive on $X$ and $|X|=|N|$, also $b(\overline{e_G})=\beta(\overline{e_G})[e_N]=e_N$ as $\beta$ is a homomorphism. From here, we construct $\alpha_b:N\to \Perm(X)$ defined by $\alpha_b(\eta)=b^{-1}\rho_N(\eta)b$ and claim that this $\alpha_b$ is in $\mathcal{A}$. First note that for all $\eta,\mu\in N$ we have
\begin{align}\label{alphom}
\begin{split}
\alpha(\eta\mu) &= b^{-1}\rho_N(\eta\mu)b               \\
                &= b^{-1}\rho_N(\eta)\rho_N(\mu)a^{-1}       \\
                &= b^{-1}\rho_N(\eta)bb^{-1}\rho_N(\mu)b\\
                &= \alpha(\eta)\alpha(\mu),
\end{split}
\end{align}
so $\alpha$ is a homomorphism. Then $\alpha(\eta)= \id$ means $\alpha(\eta)[\barg]= \barg$ for all $\barg\in X$, and
\begin{equation}\label{alpinj}
\begin{alignedat}{4}
   & &{} \alpha(\eta)[\barg]&= \barg     {}& \text{ for all }\barg\in X \\ 
   &\implies&{}\;\;\; b^{-1}\rho_N(\eta)b[\barg]&= \barg    {}& \text{ for all }\barg\in X \\
   &\implies&{} \rho_N(\eta)b[\barg]&= b[\barg]  {}& \;\;\;\text{ for all }\barg\in X \\
   &\implies&{} \eta&= e_N, {}&
\end{alignedat}
\end{equation}
due to the bijectivity of $b$ and the injectivity of $\rho_N$, so $\alpha$ is an embedding. Finally, $\alpha(N)$ is indeed regular on $X$, since $b$ is a bijection and $\rho_N(N)$ is regular on $N$; therefore $\alpha_b\in \mathcal{A}$.

Conversely, if we begin with $\alpha\in \mathcal{A}$, since $\alpha(N)$ is regular on $X$ we can define a bijection $a:N\to X$ given by $a(\eta)=\alpha\iota(\eta)[\overline{e_G}]$. Note that $a(e_N)=\alpha\iota(e_N)[\overline{e_G}]=\overline{e_G}$. With this we may construct $\beta_a:G\to \Perm(N)$ given by $\beta_a(g)=a^{-1}\lambda_X(g)a$ for all $g\in G$, we claim $\beta_a\in \mathcal{B}$. The fact that $\beta_a$ is a homomorphism follows similarly to (\ref{alphom}), and, as we are working with the Galois closure, $\lambda_X$ is injective \cite[Lemma 6.6]{chbook} so $\beta_a$ is injective by a similar argument to (\ref{alpinj}). Hence $\beta_a$ is an embedding. We have that $\beta_a(G)$ is transitive on $N$ since $a$ is a bijection and $\lambda_X(G)$ is transitive on $X$. For the condition on the stabiliser suppose $\beta_a(g)[e_N]=e_N$, this is saying 
\begin{alignat*}{3}
& & \;\;\;a^{-1}\lambda_X(g)a[e_N]&=e_N    \\
&\iff&{} a^{-1}\lambda_X(g)[\overline{e_G}]&=e_N     \\
&\iff&{} a^{-1}[\barg]  &= e_N              \\
&\iff&{} \barg&=\overline{e_G},
\end{alignat*}
i.e. $g\in G'$, this means $\Stab_{\beta_a(G)}(e_N)=\beta_a(G')$ and $\beta_a \in \mathcal{B}$.

It remains to show that these procedures are mutually inverse. Let $\alpha\in \mathcal{A}$ and construct $a:N\to X$ and then $\beta_a\in \mathcal{B}$ as above. Consider the subsequent $b:X\to N$ coming from $\beta_a$, this is given by $b(\barg)=\beta(\barg)[e_N]=a^{-1}\lambda_X(\barg)a[e_N]=a^{-1}(\barg)$ for all $\barg\in X$. To show that $\alpha=\alpha_b$, we verify that $b\alpha(\eta) b^{-1} = \rho_N(\eta)$; for $\eta,\mu\in N$ we have,
\begin{align*}
b\alpha(\eta) b^{-1}[\mu]&= a^{-1}\alpha(\eta) a[\mu]                       \\
                        &= a^{-1}\alpha(\eta)\alpha\iota(\mu)[\overline{e_G}]      \\
                        &= a^{-1}\alpha(\eta\mu^{-1})[\overline{e_G}]              \\
                        &= a^{-1}\alpha\iota(\mu\eta^{-1})[\overline{e_G}]         \\
                        &= a^{-1}a(\mu\eta^{-1})                            \\
                        &= \mu\eta^{-1}                                     \\
                        &= \rho_N(\eta)[\mu].
\end{align*}
Going from $\alpha$ to $\alpha_b$ via $\beta_a$ therefore amounts to the identity on $\mathcal{A}$. The analogous result on $\mathcal{B}$ follows similarly.
\end{proof}

The maps used in the above proof are summarised in \Cref{byoskewfig}.

\begin{figure}[h]
\centering
\[\xymatrixcolsep{8pc} 
\xymatrixrowsep{8pc}
\renewcommand{\labelstyle}{\textstyle}
\xymatrix{
    \txt{transitive embeddings  \\$\beta:G\into \Perm(N)$ with \\$\Stab_{\beta(G)}(e_N)=\beta(G')$} 
    \ar@/^1pc/@{->}[r]^{\mapsto \beta(\cdot)[e_N]} 
    & \txt{bijections $b:X\to N$\\with $b(\overline{e_G})=e_N$} 
    \ar@/^1pc/@{-->}[dl]^{b^{-1}\mapsfrom}\ar@/^1pc/@{->}[d]^{\mapsto b^{-1}\rho_N(\cdot)b}\\
    \ar@/^1pc/@{->}[u]^{\mapsto a^{-1}\lambda_X(\cdot)a} \ar@/^1pc/@{-->}[ur]^{\mapsto a^{-1}}
    \txt{bijections $a:N\to X$\\with $a(e_N)=\overline{e_G}$} 
    & \ar@/^1pc/@{->}[l]^{\alpha\iota(\cdot)[\overline{e_G}]\mapsfrom} 
    \txt{regular embeddings \\$\alpha:N\into \Perm(X)$}
}\]
\caption{The Revised Maps.}
\label{byoskewfig}
\end{figure}
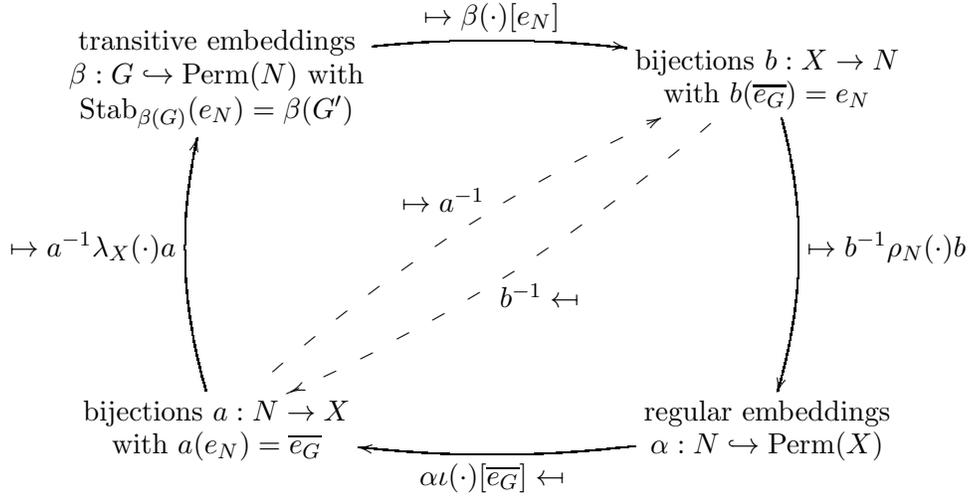

\begin{rmk}
To justify our choice of adjustment we note that this aligns with the $``a"$ used in \cite[Theorem 5.1]{mltruoid} to transfer the operation in $N$ onto $X$. In this setting $N\subseteq \Perm(X)$, so $\alpha$ is trivial and $a$ is given by $a(\eta)=\eta^{-1}[\bar e]$. Taking the other modification between $b$ and $\alpha$, as opposed to between $\beta$ and $b$, allows for the maintenance of the neat relationship between $a$ and $b$.
\end{rmk}

As in the traditional translation, the bijection between $\mathcal{A}$ and $\mathcal{B}$ restricts to a bijection between those $\alpha$ for which $\alpha(N)$ is $G$-stable and those $\beta$ for which $\beta(G)\subseteq \Hol(N)$.

\begin{prop}
Using the maps set out in the proof of \Cref{byoskewprop}, we have that $\alpha(N)$ is $G$-stable if and only if $\beta(G)\subseteq \Hol(N)$.
\end{prop}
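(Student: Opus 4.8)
The plan is to unwind both conditions to the same elementary statement about permutations commuting, using the explicit maps from the proof of Proposition~\ref{byoskewprop}. Recall that $\alpha(N)$ being $G$-stable means $\lambda_X(g)\alpha(N)\lambda_X(g)^{-1}=\alpha(N)$ for all $g\in G$, while $\beta(G)\subseteq\Hol(N)$ means $\beta(g)$ normalises $\lambda_N(N)$ (equivalently $\rho_N(N)$, since these coincide as subgroups of $\Perm(N)$) for all $g\in G$. Since $\beta(g)=a^{-1}\lambda_X(g)a$ and $\alpha(\eta)=b^{-1}\rho_N(\eta)b$ with $b=a^{-1}$ (the relationship established at the end of the proof of Proposition~\ref{byoskewprop}), conjugating $\rho_N(N)$ by $\beta(g)$ is $a^{-1}\lambda_X(g)a\,\rho_N(N)\,a^{-1}\lambda_X(g)^{-1}a$. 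First I would rewrite $\rho_N(N)=b\alpha(N)b^{-1}=a^{-1}\alpha(N)a$, so that this conjugate becomes $a^{-1}\big(\lambda_X(g)\alpha(N)\lambda_X(g)^{-1}\big)a$. Thus $\beta(g)$ normalises $\rho_N(N)$ if and only if $\lambda_X(g)\alpha(N)\lambda_X(g)^{-1}=\alpha(N)$, and running this over all $g\in G$ gives the equivalence directly.

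The one point requiring a small remark is that $\beta(G)\subseteq\Hol(N)$ is usually phrased as: each $\beta(g)$ lies in the normaliser of $\lambda_N(N)$ in $\Perm(N)$, whereas the computation above naturally produces the normaliser of $\rho_N(N)$. I would invoke the standard fact that $\lambda_N(N)$ and $\rho_N(N)$ have the same normaliser in $\Perm(N)$ — indeed $\rho_N(N)=\iota\lambda_N(N)\iota$ and more to the point both generate, together with $\Aut(N)$, the same holomorph; equivalently, $\Hol(N)$ can be described as either $\Norm_{\Perm(N)}(\lambda_N(N))$ or $\Norm_{\Perm(N)}(\rho_N(N))$ (see \cite{chbook}). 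With this identification the argument is purely formal.

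I do not anticipate a genuine obstacle here; the content is bookkeeping with the maps of Figure~\ref{byoskewfig}. The only place to be careful is to use the correct form of $\alpha$ and $\beta$ (the \emph{revised} maps, with $\rho_N$ rather than $\lambda_N$ on the $\alpha$ side) and to keep track of which of $a$, $b=a^{-1}$ appears where, so that the conjugation identity $\rho_N(N)=a^{-1}\alpha(N)a$ is applied in the right direction. Once that is in place, both the forward and reverse implications fall out of the single displayed identity $a^{-1}\lambda_X(g)a\,\rho_N(N)\,a^{-1}\lambda_X(g)^{-1}a = a^{-1}\big(\lambda_X(g)\alpha(N)\lambda_X(g)^{-1}\big)a$, read together with the remark on normalisers.
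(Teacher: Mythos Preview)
Your proposal is correct and takes essentially the same approach as the paper: both use $b=a^{-1}$ to rewrite $\alpha(\eta)=a\rho_N(\eta)a^{-1}$ and then observe that conjugation by $a$ carries $\lambda_X(g)$-conjugation of $\alpha(N)$ to $\beta(g)$-conjugation of $\rho_N(N)$, finishing with the identification $\Hol(N)=\Norm_{\Perm(N)}(\rho_N(N))$. The only cosmetic difference is that the paper phrases the chain of equivalences element-by-element (for fixed $\eta,\mu\in N$) rather than at the level of the subgroup $\rho_N(N)$, but the content is identical.
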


\begin{proof}
Note that, 
\begin{equation*}
\alpha(\eta)= b^{-1}\rho_N(\eta)b = a \rho_N(\eta) a^{-1}.
\end{equation*}

For $g\in G$ and $\eta,\mu\in N$,
\begin{align*}
&&\lambda_X(g) \alpha(\eta)\lambda_X(g^{-1}) &= \alpha(\mu)                             \\
&\iff& a^{-1}\lambda_X(g)\alpha(\eta)\lambda_X(g^{-1}) a &=  a^{-1}\alpha(\mu)a         \\
&\iff& a^{-1}\lambda_X(g)a\rho_N(\eta)a^{-1}\lambda_X(g^{-1}) a &=  a^{-1}a\rho_N(\mu)a^{-1}a  \\
&\iff& \beta(g)\rho_N(\eta)\beta(g)^{-1} &= \rho_N(\mu), 
\end{align*}
so that $ \alpha(N)$ is $G$-stable if and only if $\beta(G)\subseteq \Norm_{\Perm(N)}(\rho_N(N))$, i.e. if and only if ${\beta(G)\subseteq \Hol(N)}$.
\end{proof}

\section{Almost Classical Solutions}
\label{secsol}

In \cite{ckmtybe}, we saw that skew bracoids that contain a brace can be used to produce solutions to the set theoretic Yang-Baxter equation, via their connection with semi-braces. As we have seen, to be almost a brace or almost classical, a skew bracoid must contain a brace and then satisfy additional conditions. In this final section, we consider how these properties manifest in the resulting solutions. But first, we review the procedure to obtain a solution from a skew bracoid.

Following \cite{ckmtybe}, we work with skew bracoids that contain a brace of the form $(G,H)$, with $H\subseteq G$ a complement to $S=\Stab_G(e)$. Such skew bracoids may be constructed from arbitrary skew bracoids that contain a brace via the bijection between the chosen complement and the additive group, see \cite[Section 2]{ckmtybe} for details. To produce a solution we then use the $\gamma$-function associated with the skew bracoid $(G,H)$ to define $\sigma:G\to \Map(G)$ and $\tau:G\to \Map(G)$ taking $g\mapsto \sigma_g$ and $\tau \mapsto \tau_g$ by
\begin{align*}
    \sigma_{g_1}(g_2)&= \gamma_{g_1}(g_2\odot e),   \\
    \tau_{g_2}(g_1)&= \sigma_{g_1}(g_2)^{-1}g_1g_2
\end{align*}
for all $g_1,g_2\in G$. This $\sigma$ and $\tau$ then give rise to a solution on $G$ given by $\bm{r}(g_1,g_2)= (\sigma_{g_1}(g_2),\tau_{g_2}(g_1))$ by \cite[Proposition 4.2]{ckmtybe}. Such solutions are \textit{right non-degenerate}, meaning $\tau_g$ is bijective for all $g\in G$, but only \textit{left non-degenerate}, $\sigma_g$ is bijective for all $g\in G$, if the skew bracoid is essentially a skew brace \cite[Section 4]{ckmtybe}.

We know that $(G,H)$ contains the brace $(H,H)$ if and only if $G$ has the exact factorisation $HS$ where $S=\Stab_G(e)$. Given this, it is intuitive that the resulting solution is isomorphic to a matched product, see \cite{ccsmatch}, of the solution coming from the skew brace on $H$ and a left-degenerate piece on $S$ \cite{ckmtybe}. We may then expect that strengthening the exact factorisation in $G$ to a semi-direct product, i.e. going from containing a brace to being almost a brace, strengthens the matched product of solutions to a semi-direct product.

\begin{prop}
    Let $(G,H)$ be a skew bracoid that is almost a brace $(H,H)$, write $S$ for $\Stab_G(e_N)$. The solution on $G$ is isomorphic to a semi-direct product of a solution on $H$ and one on $S$.
\end{prop}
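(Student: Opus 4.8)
The plan is to use \Cref{induce} in reverse: exhibit the almost-brace skew bracoid $(G,H)$ as an induced skew bracoid, then track the solution constructions through that decomposition. Concretely, since $G = HS \cong H \rtimes S$, the trivial skew brace on $S$ (viewed as a skew bracoid $(S,S)$) is compatible with $(G,H)$, and applying \Cref{induce} produces an induced skew bracoid $(G, H \times S, \odot)$ with action $g \odot (\eta, s) = (g \odot_H \eta, \pi(g) \odot_S s)$, where $\pi : G \to S$ is the projection. By the remark following \Cref{induce}, the stabiliser $\Stab_G(e_H, e_S)$ coincides with $\Stab_S(e_S) = \{e_S\}$, so $(G, H \times S)$ is essentially a skew brace; moreover the bijection $hs \mapsto (h \odot_H e_H, s)$ shows that as a set-with-stabiliser-data this is really the original $(G,H)$ with the $S$-factor recording the coset component. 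First I would make precise the identification between $(G,H)$ and $(G, H \times S)$ and check that the solution construction of \cite{ckmtybe} applied to either yields the same $\bm r$ on $G$ (up to the obvious relabelling of $G$ with $H \times S$), since the $\gamma$-function is an invariant of the skew bracoid and $\sigma, \tau$ are built only from $\gamma$ and the action on the identity.

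Next I would compute $\sigma$ and $\tau$ explicitly in terms of the semi-direct product coordinates. Writing a general element of $G$ as $h s$ with $h \in H$, $s \in S$, \Cref{acgamma} (or rather its almost-brace analogue via the induced structure) gives $\gamma(h_1 s_1)$ acting on $H$-components, and $\gamma$ is trivial on the $H$-part by \Cref{gamker}; so $\sigma_{h_1 s_1}(h_2 s_2) = \gamma_{h_1 s_1}\bigl((h_2 s_2) \odot e\bigr)$ will factor as a pair: an $H$-component depending on $s_1$ (conjugation) applied to $h_2 \odot_H e_H$, and an $S$-component that is simply $s_2$ (since $S$ is trivial as a skew brace, its $\gamma$ is trivial). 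Then $\tau_{h_2 s_2}(h_1 s_1) = \sigma_{h_1 s_1}(h_2 s_2)^{-1} (h_1 s_1)(h_2 s_2)$ should split analogously. The upshot I expect: in coordinates $(h, s) \in H \times S$, the map $\bm r$ has the form $\bm r\bigl((h_1, s_1), (h_2, s_2)\bigr) = \bigl((\sigma^H_{h_1, s_1}(h_2), s_2), (\tau^H_{h_2}(h_1), \tau^S_{s_2}(s_1))\bigr)$ where the $H$-entries are governed by the trivial-skew-brace solution on $H$ (which, since $(H,H)$ is trivial, is itself explicit) twisted by the $S$-action, and the $S$-entries are governed by the left-degenerate solution on $S$ coming from $(S,S)$. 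Comparing this with the definition of a semi-direct (or matched) product of solutions in \cite{ccsmatch} or \cite{ccsmatch}-type references, one reads off that $\bm r_G$ is the semi-direct product of $\bm r_H$ and $\bm r_S$, with the structure maps of the product being exactly the $S$-conjugation action on $H$ inherited from $G \cong H \rtimes S$.

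The main obstacle I anticipate is bookkeeping rather than conceptual: one must pin down precisely which notion of ``semi-direct product of solutions'' is meant (the literature has matched products of which semi-direct products are a special case, and there is a choice of which factor acts on which), verify the compatibility conditions that make the product well-defined, and confirm that the twisting data extracted from $G \cong H \rtimes S$ satisfies exactly those conditions. A secondary subtlety is the non-degeneracy mismatch: the $H$-solution is bijective (non-degenerate, as $(H,H)$ is a genuine skew brace) while the $S$-piece is only right non-degenerate, so the semi-direct product decomposition must be stated in the category of (not-necessarily-left-non-degenerate) solutions, and I would need to check the product construction there is the same as the one implicitly used in \cite{ckmtybe} for the matched-product statement. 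Once the correct product notion is fixed, each verification is a direct computation using \Cref{acgamma}, \Cref{gamker}, and the triviality of the skew brace on both $H$ and $S$.
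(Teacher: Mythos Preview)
Your proposal conflates ``almost a brace'' with ``almost classical''. The hypothesis here is only that $H$ is a \emph{normal} complement to $S$; the contained skew brace $(H,H)$ is not assumed trivial. Consequently \Cref{acgamma} and \Cref{gamker} do not apply, and your repeated assertions that $\gamma$ is trivial on $H$ and that ``$(H,H)$ is trivial'' are unjustified in this generality. This undermines the coordinate computation you outline for $\sigma$ and $\tau$: the $H$-piece of the solution is the (possibly non-trivial) skew brace solution on $(H,H)$, not the trivial one.

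The detour through \Cref{induce} does not help and your claim that it recovers the same $\bm r$ is false. Writing $g_2 = h_2 s_2$, the skew bracoid $(G,H)$ gives $\sigma_{g_1}(g_2) = \gamma_{g_1}(g_2 \odot e) = \gamma_{g_1}(h_2) \in H$, whereas the induced skew brace $(G, H\times S)$ gives $\sigma'_{g_1}(g_2) = \gamma'_{g_1}(g_2) = \gamma_{g_1}(h_2)\, s_2$. These are different maps, so the two constructions do not yield the same solution on $G$.

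The paper's argument is shorter and sidesteps both issues. It invokes the already-established fact (via the semi-brace correspondence and \cite[Theorem~10]{ccsmatch}) that the solution on $G$ is a \emph{matched} product of solutions on $H$ and $S$, with twisting maps $\beta_s(h) = \sigma_s(h)$ and $\alpha_h(s) = (\tau_{h^{-1}}(s^{-1}))^{-1}$. A direct computation using only $s \odot e = e$ and the normality of $H$ (so that $sh \odot e = shs^{-1} \odot e = shs^{-1}$) yields $\beta_s(h) = shs^{-1}$ and then $\alpha_h(s) = s$. Since $\alpha$ is trivial, the matched product is a semi-direct product. No hypothesis on the internal skew brace structure of $(H,H)$ is used beyond $H \trianglelefteq G$.
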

    
\begin{proof}
    The matched product we refer to is seen through that in the associated semi-brace, note that the relevant exact factorisation in $G$ is fixed in the transformation between skew bracoid and semi-brace \cite[Theorem 3.5]{ckmtybe}. Taking the $\alpha:S\to \Perm(H)$ and $\beta:H\to \Perm(S)$ from \cite[Theorem 10]{ccsmatch} with the relationship between our $\sigma$ and $\tau$ and those coming from the semi-brace \cite[Proposition 4.1]{ckmtybe}, we know that the solution is isomorphic to a matched product via $\alpha$ and $\beta$ defined by $\alpha_h(s)=(\tau_{h^{-1}}(s^{-1}))^{-1}$ and $\beta_s(h)=\sigma_s(h)$ for all $h\in H$ and all $s\in S$. Then for $s\in S$ and $h\in H$ we have,
    \begin{align}\label{beta}
        \begin{split}
        \beta_s(h)&=\sigma_s(h)                          \\
                &= (s\odot e)^{-1}(s\odot (h\odot e))    \\
                &= sh\odot e                             \\
                &= shs^{-1},
        \end{split}
    \end{align} 
    and
    \begin{align*}
        \alpha_h(s)&=(\tau_{h^{-1}}(s^{-1}))^{-1}                  & \\
                &=(\sigma_{s^{-1}}(h^{-1})^{-1}s^{-1}h^{-1})^{-1}  & \\
                &=((s^{-1}h^{-1}s)^{-1}s^{-1}h^{-1})^{-1}          & \text{by (\ref{beta})} \\
                &=(s^{-1}hss^{-1}h^{-1})^{-1}                      & \\
                &=s.                                               & 
    \end{align*}
    We see that the actions $\alpha$ and $\beta$ are transparently the actions of $S$ on $H$ and $H$ on $S$ within $G$, in particular $\alpha$ is trivial, so the matched product is in fact a semi-direct product.
\end{proof}

We know from \cite[Proposition 7 (1)]{ccssemi}, seen through the correspondence \cite[Theorem 3.5]{ckmtybe}, that $\tau_g(S)=S$ for all $g\in G$, so we may restrict $\tau$ to a map $\tau|_S$ between $G$ and $\Map(S)$. We may then read a result of Castelli as saying that a skew bracoid $(G,H)$ is almost a brace with respect to $H$ if and only if $\ker(\tau|_S)=H$ \cite[Theorem 3.3]{castacsemi}. We recall that going from almost a brace to almost classical, we specify that the skew brace $(H,H)$ is trivial, which amounts to $H\subseteq\ker(\sigma|_H)$ by \Cref{gamker}. In this case, the solution on $H$ is the one coming from the trivial skew brace on $H$, or indeed the group $H$. Moreover, with our description of the $\gamma$-function of such skew bracoids from \Cref{acgamma}, noting that $s_1h_2s_1^{-1}\in H$ for any $h_2\in H$, we immediately obtain a complete description of the solution on $G$.

\begin{cor}\label{alclasol}
    Let $(G,H)$ be a skew bracoid that is almost classical with respect to $H$. There is a solution on $G$ given by
    \[ \bm{r}(h_1s_1,h_2s_2)= (s_1h_2s_1^{-1},s_1h_2^{-1}s_1^{-1}h_1s_1h_2s_2)\]
    for all $h_1,h_2\in H$ and all $s_1,s_2\in S$.
\end{cor}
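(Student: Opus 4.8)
The plan is to substitute the structural description of the $\gamma$-function for an almost classical skew bracoid (\Cref{acgamma}) directly into the general formula for the solution $\bm{r}(g_1,g_2) = (\sigma_{g_1}(g_2), \tau_{g_2}(g_1))$ recalled at the start of the section. Since $(G,H)$ contains a brace with respect to $H$, every element of $G$ factors uniquely as $h s$ with $h \in H$, $s \in S$, so it suffices to evaluate $\sigma$ and $\tau$ on such pairs. We already know from \cite[Proposition 4.2]{ckmtybe} that $\bm{r}$ as written is a solution, so the content here is purely the computation of the two components; no Yang-Baxter verification is needed.

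First I would compute $\sigma_{h_1 s_1}(h_2 s_2) = \gamma_{h_1 s_1}\big((h_2 s_2)\odot e\big)$. Since $S = \Stab_G(e)$ we have $(h_2 s_2)\odot e = h_2 \odot e$, and then \Cref{acgamma} (applied with the roles of its $h_1 s_1$ and $h_2$) gives $\gamma_{h_1 s_1}(h_2 \odot e) = s_1 h_2 s_1^{-1} \odot e$. Because $(G,H)$ is presented with $H$ as the complement, the additive group is identified with $H$ and $x \odot e = x$ for $x \in H$ (more precisely, $s_1 h_2 s_1^{-1} \in H$ by normality of $H$, and its image under $h \mapsto h\odot e$ is itself under this identification); hence $\sigma_{h_1 s_1}(h_2 s_2) = s_1 h_2 s_1^{-1}$, which is the first component claimed.

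Next I would compute the second component $\tau_{h_2 s_2}(h_1 s_1) = \sigma_{h_1 s_1}(h_2 s_2)^{-1}\,(h_1 s_1)(h_2 s_2)$. Using the first component just obtained, this is $(s_1 h_2 s_1^{-1})^{-1} h_1 s_1 h_2 s_2 = s_1 h_2^{-1} s_1^{-1} h_1 s_1 h_2 s_2$, exactly the expression in the statement. Assembling the two components yields the displayed formula for $\bm{r}$, completing the proof.

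The only subtlety — and the step I would be most careful about — is the bookkeeping between the abstract additive group $H$ and the ambient group $G$: one must check that the element $s_1 h_2 s_1^{-1}$ produced by $\gamma$ genuinely lies in $H$ (which follows from $H \trianglelefteq G$) so that under the identification of the additive group with the complement $H$ the quantity $s_1 h_2 s_1^{-1} \odot e$ really is the group element $s_1 h_2 s_1^{-1}$, and likewise that the multiplication $(h_1 s_1)(h_2 s_2)$ appearing in the definition of $\tau$ is the multiplication in $G$. Beyond that, the argument is a direct substitution; there is no genuine obstacle, since the work of establishing that $\bm{r}$ is a solution and that $\gamma$ has this form has already been done in \cite{ckmtybe} and \Cref{acgamma} respectively.
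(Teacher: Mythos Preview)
Your proposal is correct and follows essentially the same approach as the paper: the paper simply remarks that the formula follows immediately from the description of the $\gamma$-function in \Cref{acgamma} together with the general solution $\bm{r}(g_1,g_2)=(\sigma_{g_1}(g_2),\tau_{g_2}(g_1))$ from \cite{ckmtybe}, noting that $s_1h_2s_1^{-1}\in H$ by normality. Your write-up spells out exactly this computation, including the identification of $H$ with the additive group and the care about where $s_1h_2s_1^{-1}$ lives, which the paper leaves implicit.
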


We can consider our induced construction, \Cref{respind}, from a solution perspective. Given a skew bracoid that is almost a brace, we can use a skew bracoid that contains a brace on its stabiliser to lessen the degenerate piece. In the extreme case of taking a skew brace structure on this stabiliser, we recover a non-degenerate solution.

In a similar way to the proof of \Cref{enuhol}, we note that skew bracoids that contain a brace, and therefore solutions, can be constructed by taking the subgroup generated by a regular subgroup of the holomorph and any subgroup of the automorphism group. However, we cannot obtain a generic enumeration result like \Cref{enuhol} in the case of skew bracoids that contain or are almost a brace. Matters are complicated by the fact that these regular subgroups are not in general invariant under conjugation by automorphisms, and moreover, distinct regular subgroups could generate the same transitive subgroup when taken with carelessly chosen subgroups of $\Aut(N)$.

To conclude, we exploit the behaviour we saw in \Cref{d2ncd} - that a skew bracoid may contain multiple viable complements to the stabiliser, leading the skew bracoid to be almost classical with respect to one complement while merely contain a brace due to another. This allows us to draw solutions that are distinct in a meaningful way from a single skew bracoid.

\begin{eg}
Within our family of skew bracoids $(G,N)\cong (D_{2n},C_d)$ we focus on the reduced case in which $d=n$, and first take the complement $R=\langle r\rangle$ to $S=\langle s\rangle$. To write $(G,N)$ as $(G,R)$ we need only relabel $N=\langle \eta \rangle$ with $R$ in the obvious way. Since $(G,R)$ is almost classical with respect to $R$ we may employ \Cref{alclasol} to see that $G$ with
\begin{align*}
    \bm{r}(r^is^j,r^ks^\ell)&=(s^jr^ks^{-j},s^jr^{-k} s^{-j}                                    r^is^jr^ks^\ell)     \\
        &= (r^{(-1)^jk},r^{i}s^{j+\ell})
\end{align*}
is a solution.

When $n$ is even, the subgroup $H=\langle r^2,rs\rangle$ is another complement to $S$, by which $(G,N)$ is almost a brace. To find the resulting solution we to transfer the operation from $N$ onto $H$ using the inverse of the evaluation map $h\mapsto h\odot e_N$, which we denote $b$.
Note that \[b(\eta)=rs,\;\; b(\eta^2)=r^2, \;\;b(\eta^3)=r^3s, \;\; b(\eta^4)=r^4,\; \ldots\] so we may identify $H$ with a subgroup of $C_n\times C_2$ with $rs$ as a generator. The action of $G$ on $H$ is then $r^is^j\odot (rs)^k=(rs)^{i+(-1)^jk}$, morally as before.

Using $b:H\to G$ to emphasise which group structure is at work we then have,
\begin{align*}
\sigma_{r^is^j}(r^ks^\ell) &= b(\gamma_{r^is^j}((rs)^k))       \\
        &= b((rs)^{(-1)^jk})       \\
        &= b(r^{(-1)^jk}s^{(-1)^jk})\\
        &= r^{(-1)^jk}s^k,  \\ 
\tau_{r^ks^\ell}(r^is^j) &= \sigma_{r^is^j}(r^ks^\ell)^{-1}r^is^jr^ks^\ell   \\
        &= s^kr^{-(-1)^jk}r^{i+(-1)^jk}s^{j+\ell}     \\
        &= r^{(-1)^ki}s^{j+k+\ell}.
\end{align*}
Hence $G$ with $\textbf{r}(r^is^j,r^ks^\ell)=(r^{(-1)^jk}s^k,r^{(-1)^ki}s^{j+k+\ell})$ is also a solution.   
\end{eg}

\bibliography{refs}{}
\bibliographystyle{amsalpha}
\end{document}